\numberwithin{equation}{section}
\newtheorem{lem}{Lemma}
\newtheorem{prop}{Proposition}
\newtheorem{thm}{Theorem}
\newtheorem{conj}{Conjecture}
\begin{document}

\begin{large}
\centerline{\Large \bf Divisors of an Integer in a Short Interval}
\end{large}
\vskip 10pt
\begin{large}
\centerline{\sc  Patrick Letendre}
\end{large}
\vskip 10pt
\centerline{\it Dedicated to Carl Pomerance on the occasion of his $80th$ birthday.}
\vskip 10pt
\begin{abstract}
Let $\mathcal{D}_{n} \subset \mathbb{N}$ denote the set of the $\tau(n)$ divisors of $n$. We study the function
$$
D_{n}(X,Y):=|\{d \in \mathcal{D}_{n}:\ X \le d \le X+Y\}|
$$
for $Y \le X$.
\end{abstract}
\vskip 10pt
\noindent AMS Subject Classification numbers: 11N37, 11N56, 11N64.

\noindent Key words: divisors, the number of divisors function.

\section{Introduction and notation}

Let $\mathcal{D}_{n} \subset \mathbb{N}$ denote the set of the $\tau(n)$ divisors of $n$. For each integer $n \ge 1$, we consider the function
$$
D_{n}(X,Y):=|\{d \in \mathcal{D}_{n}:\ X \le d \le X+Y\}|.
$$

Some nontrivial estimates are known for $D_{n}(X,Y)$. For example, Lemma 6 from \cite{pl} implies that
$$
D_{n}(X,X) \ll \frac{\tau(n)}{\sqrt{\Omega_{2}(n)}}.
$$
where $\Omega_{2}(n):=\sum_{p^{\beta}\| n}\beta^{2}$. Also, we observe that $D_{n}(X,Y)=D_{n}\bigl(\frac{n}{X+Y},\frac{Yn}{X(X+Y)}\bigr)$. The following conjecture is suggested in the literature, see \cite{pe:mr}, \cite{thc:1} and \cite{thc:2}.

\begin{conj}\label{con:1}
Let $\epsilon > 0$ be fixed. There exists a constant $k_{\epsilon}$ such that, for each integer $n \ge 1$, we have 
$$
D_{n}(n^{1/2},n^{1/2-\epsilon}) \le k_{\epsilon}.
$$
\end{conj}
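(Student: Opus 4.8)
The plan is to reduce Conjecture~\ref{con:1} to a statement about the least common multiples of the divisors lying in the window, and then to argue that an excessive number of such divisors would force that least common multiple to exceed $n$. Write $X=n^{1/2}$ and $Y=n^{1/2-\epsilon}$, and suppose $d_{1}<d_{2}<\cdots<d_{m}$ are all the divisors of $n$ with $X\le d_{i}\le X+Y$; the task is to bound $m$ in terms of $\epsilon$ alone. Two elementary facts drive everything. First, since $\gcd(d_{i},d_{j})\mid d_{j}-d_{i}$ we have $\gcd(d_{i},d_{j})\le Y$ whenever $i\ne j$, and therefore
$$
\operatorname{lcm}(d_{i},d_{j})=\frac{d_{i}d_{j}}{\gcd(d_{i},d_{j})}\ \ge\ \frac{X^{2}}{Y}=n^{1/2+\epsilon};
$$
thus \emph{every} pairwise $\operatorname{lcm}$ is a divisor of $n$ that already exceeds $\sqrt n$ by a factor $n^{\epsilon}$. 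Second, replacing each $d_{i}$ by the complementary divisor $n/d_{i}$ only moves the window to one of comparable multiplicative width about $\sqrt n$, so one may pass freely between the two pictures and assume, for instance, that all $d_{i}\le\sqrt n$.

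Next I would iterate the first fact at the level of prime valuations. For a prime $p$ one has $v_{p}\bigl(\operatorname{lcm}(d_{1},\dots,d_{m})\bigr)=\max_{i}v_{p}(d_{i})$, so the constraint $\operatorname{lcm}(d_{1},\dots,d_{m})\mid n$ reads $\sum_{p}\bigl(\max_{i}v_{p}(d_{i})\bigr)\log p\le\log n$. Since $\log d_{i}=\tfrac12\log n+O(n^{-\epsilon})$ for each $i$, the total ``excess over $d_{1}$'', namely $\sum_{p}\bigl(\max_{i}v_{p}(d_{i})-v_{p}(d_{1})\bigr)\log p$, is at most $\tfrac12\log n+O(1)$; while the pairwise bound gives, for each $j\ge2$,
$$
\sum_{p}\bigl(v_{p}(d_{j})-v_{p}(d_{1})\bigr)^{+}\log p=\log\frac{\operatorname{lcm}(d_{1},d_{j})}{d_{1}}\ \ge\ \epsilon\log n+O(1).
$$
If the ``excess parts'' $\bigl(v_{p}(d_{j})-v_{p}(d_{1})\bigr)^{+}$ of distinct $j$ were supported on disjoint sets of primes, then $\max_{i}v_{p}(d_{i})-v_{p}(d_{1})=\sum_{j\ge2}\bigl(v_{p}(d_{j})-v_{p}(d_{1})\bigr)^{+}$ for every $p$, and summing the previous inequality over $j$ would give $(m-1)\,\epsilon\log n\le\tfrac12\log n+O(1)$, hence $m\le\tfrac1{2\epsilon}+O(1)$ — exactly a bound of the shape demanded by the conjecture.

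The difficulty — and the reason the statement is still a conjecture — is that there is no reason for those excess supports to be disjoint: many of the $d_{j}$ may exceed $d_{1}$ at the same prime, and then a crude ``$\max\ge$ average'' destroys the factor $m-1$ one has just gained. Some partial disjointness is forced: if $p^{v_{p}(d_{1})+1}>Y$ then at most one $j$ can have $v_{p}(d_{j})>v_{p}(d_{1})$, since two of them would place $p^{v_{p}(d_{1})+1}$ inside a gcd of size $\le Y$; so only the ``small prime powers'' $p^{v_{p}(d_{1})+1}\le n^{1/2-\epsilon}$ cause trouble — but these may carry a positive proportion of the mass $\log d_{1}\approx\tfrac12\log n$, and controlling their contribution seems to require genuinely multiplicative information about $n$ (the multiplicities $\Omega_{2}(n)$ that already govern Lemma~6 of \cite{pl}, a sieve, or a Hooley $\Delta$-type input tuned to the scale $n^{-\epsilon}$). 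An equivalent and perhaps more suggestive formulation is that the ratios $d_{j}/d_{i}\in[1,\,1+n^{-\epsilon}]$ are reduced fractions of height $\le\sqrt n$ crowded into a window of multiplicative length $n^{-\epsilon}$: a Farey/quotient count limits the number of such fractions of bounded height, but a single $d_{i}$ can spawn many of them, and ruling out the coincidences $d_{j}/d_{i}=d_{\ell}/d_{k}$ is the crux. I expect the hard part to be exactly this — converting the abundant pairwise structure produced in the first two steps into a bound on $m$ uniform in $n$ — and I would not be surprised if a complete proof required a genuinely new idea rather than the lattice manipulation sketched above.
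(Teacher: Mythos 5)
The statement you were asked to prove is Conjecture \ref{con:1}: the paper does not prove it (it is presented as an open problem going back to \cite{pe:mr}, \cite{thc:1}, \cite{thc:2}), so there is no proof in the paper to compare yours against. Your proposal, as you yourself acknowledge, is not a proof either: the decisive step --- assuming that the ``excess supports'' $\bigl(v_{p}(d_{j})-v_{p}(d_{1})\bigr)^{+}$ for distinct $j$ live on disjoint sets of primes, so that the pairwise lower bounds $\log\bigl(\operatorname{lcm}(d_{1},d_{j})/d_{1}\bigr)\ge\epsilon\log n+O(1)$ can be summed over $j$ --- is exactly what cannot be justified, and without it the argument yields nothing beyond the trivial fact that each single pairwise lcm is large. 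Your partial-disjointness remark (at most one $j$ can gain at a prime power exceeding $Y$) is correct but, as you note, leaves the small prime powers uncontrolled, and these can carry a positive proportion of $\log d_{1}$. So the gap is genuine, and it is the whole content of the conjecture.

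It is worth measuring your sketch against what the paper's own machinery achieves with essentially the same idea. Your pairwise inequality $\operatorname{lcm}(d_{i},d_{j})\ge X^{2}/Y$ is in substance the two-divisor case of Lemma \ref{lem:1} (Cohen's inequality), and the paper exploits the full $k$-divisor version in Proposition \ref{prop:1}; but even after optimizing the integer parameter $t$ there, one only gets a bounded count for windows of width $n^{\theta^{2}-\epsilon}$, which at $\theta=\tfrac12$ means width $n^{1/4-\epsilon}$ --- well short of the width $n^{1/2-\epsilon}$ demanded by Conjecture \ref{con:1}. Beyond that threshold the paper obtains only the $\tau(n)$-power-saving bound of Theorem \ref{thm:1}, and the discussion in the final section (the functions $f$, $f^{*}$, Lemma \ref{lem:4} and the large sieve of Lemma \ref{lem:3}) gives savings of the form $o(i)$ rather than a bound depending on $\epsilon$ alone. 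So your diagnosis is accurate: the missing ingredient is a mechanism converting the abundance of pairwise gcd/lcm constraints into a bound uniform in $n$, and neither your sketch nor the paper supplies one.
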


In this paper, we address the possibility that a really strong general upper bound holds when $Y \ll X^{1-\epsilon}$ for each $\epsilon > 0$. Precisely, we propose the following conjecture.

\begin{conj}\label{con:2}
Let $0 < \theta < 1$ and $0 < \epsilon < \theta$ be fixed. There exists a constant $k_{\epsilon}(\theta)$ such that, for each integer $n \ge 1$, we have 
$$
D_{n}(n^{\theta},n^{\theta-\epsilon}) \le k_{\epsilon}(\theta).
$$
\end{conj}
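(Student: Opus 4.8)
Since the statement is a conjecture rather than a theorem, what follows is a proposed strategy: first dispose of the easy range of parameters by an elementary $\gcd$-argument, then reduce the remaining cases to Conjecture~\ref{con:1}, and finally point to the latter as the genuinely hard core.

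\emph{The easy range.} If $d<d'$ are two divisors of $n$ in $[n^{\theta},n^{\theta}+n^{\theta-\epsilon}]$, then $\gcd(d,d')\mid d'-d$, so
\[
  n^{\theta-\epsilon}\ \ge\ d'-d\ \ge\ \gcd(d,d')\ =\ \frac{dd'}{\operatorname{lcm}(d,d')}\ \ge\ \frac{dd'}{n}\ \ge\ n^{2\theta-1}.
\]
Hence if $\theta>1-\epsilon$ there is no such pair and $D_{n}(n^{\theta},n^{\theta-\epsilon})\le 1$ for every $n$ (and $\le 2$ on the boundary $\theta=1-\epsilon$). More generally, for a chain $d_{0}<d_{1}<\cdots<d_{k}$ of divisors in the interval, writing $d_{i+1}/d_{i}=a_{i}/b_{i}$ in lowest terms one has $\operatorname{lcm}(d_{i},d_{i+1})=b_{i}d_{i+1}\le n$, so $b_{i}\le n^{1-\theta}$ and $d_{i+1}/d_{i}\ge 1+b_{i}^{-1}\ge 1+n^{\theta-1}$; telescoping against $d_{k}/d_{0}\le 1+n^{-\epsilon}$ gives the (still $n$-dependent) bound $D_{n}(n^{\theta},n^{\theta-\epsilon})\ll n^{1-\theta-\epsilon}$. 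The whole problem is to make this uniform in $n$.

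\emph{Reduction to $\theta=\tfrac12$.} By the functional equation $D_{n}(X,Y)=D_{n}\big(\tfrac{n}{X+Y},\tfrac{Yn}{X(X+Y)}\big)$ recorded in the introduction we may assume $\theta<\tfrac12$ (the case $\theta=\tfrac12$ is Conjecture~\ref{con:1}, and $\theta>\tfrac12$ either lies in the easy range or reduces to $\theta<\tfrac12$ by the same identity). Given divisors $d_{0}<\cdots<d_{k}$ of $n$ in $[n^{\theta},n^{\theta}+n^{\theta-\epsilon}]$, choose a prime $p\nmid n$ with $p\in\big(n^{1-2\theta},\,n^{1-2\theta}(1+n^{-\eta})\big]$ for a fixed small $\eta=\eta(\theta)>0$; such $p$ exists for all large $n$ by known results on primes in short intervals, since there are many more primes in that interval than divide $n$. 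Then $d_{0}p,\dots,d_{k}p$ are $k+1$ distinct divisors of $N:=np$; because $d_{i}\ge n^{\theta}$ forces $d_{i}^{2}p>n$, each $d_{i}p>N^{1/2}$, and an elementary estimate bounds the whole batch inside $[N^{1/2},N^{1/2}+N^{1/2-\epsilon'}]$ for $n$ large, for some fixed $\epsilon'=\epsilon'(\theta,\epsilon)\in(0,\tfrac12)$. Thus $D_{n}(n^{\theta},n^{\theta-\epsilon})\le D_{N}(N^{1/2},N^{1/2-\epsilon'})$, and (handling the finitely many small $n$ trivially) Conjecture~\ref{con:1} would yield Conjecture~\ref{con:2}; in fact Conjectures~\ref{con:1} and~\ref{con:2} are equivalent in this way.

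\emph{The hard core.} This leaves $\theta=\tfrac12$, where the $\gcd$-bound collapses ($\gcd(d,d')\ge dd'/n\approx 1$). One must then study the ratios $d_{i}/d_{j}$ directly: these are rationals in $(1,1+n^{-\epsilon}]$ whose numerators and denominators are composed of primes dividing $n$ with exponents bounded by those of $n$; equivalently, the exponent vectors $(v_{p}(d_{i}))_{p\mid n}$ all lie in the box $\prod_{p\mid n}\{0,1,\dots,v_{p}(n)\}$ and are sent by $(a_{p})\mapsto\sum_{p}a_{p}\log p$ into an interval of length $\ll n^{-\epsilon}$ around $\tfrac12\log n$. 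Bounding their number is a fine-scale version of the statement that $\Delta(n)=O(1)$ for Hooley's divisor function; since $\Delta(n)$ is in fact unbounded, the content is that the clustering of divisors that occurs at unit multiplicative scale must dissolve at scales $n^{-\epsilon}$. I expect this to be the main obstacle. On heuristic grounds the conjecture is very plausible --- as $\tau(n)=n^{o(1)}$, the divisors of $n$ are far too sparse to be expected to cluster in a window of width $n^{-\epsilon}$ --- but at present there is no nontrivial upper bound, uniform in $n$, for the number of divisors of $n$ in such a window; even $\Delta(n)$ admits only the trivial bound $\Delta(n)\le\tau(n)$. One would hope to extract the required rigidity from a Freiman-type structure theorem for the near-$1$ points of the $S$-unit group ($S=\{p:p\mid n\}$) inside a box of bounded side lengths, or from Baker-type lower bounds for linear forms in logarithms; the techniques around Ford's work on integers with a divisor in a given interval, and Tenenbaum's and Hall--Tenenbaum's study of $\Delta(n)$, are the natural starting point, but a genuinely new idea appears to be needed.
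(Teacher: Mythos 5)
The statement you are addressing is a conjecture, and the paper offers no proof of it: its actual results are partial (Theorem~\ref{thm:1} gives an upper bound of the shape $\tau(n)^{1-\xi(\theta,\eta)}V(n)\log\tau(n)/(\theta(1-\theta))$, with a bounded count only in the regime of Proposition~\ref{prop:1}, i.e.\ when the interval length is below $n^{\theta^{2}}$) and Theorem~\ref{thm:2} only bounds the conjectural constant $k_{\epsilon}(\theta)$ from below. Your proposal, as you acknowledge, also does not prove the conjecture, so judged as a proof it has an unfilled gap that is the entire content of the statement: after the easy range $\theta\ge 1-\epsilon$ and the reduction step, everything rests on Conjecture~\ref{con:1}, which is itself open and is literally the case $\theta=\tfrac12$ of Conjecture~\ref{con:2}. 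Nothing uniform in $n$ is established outside the trivial range.

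That said, the pieces you do argue are essentially sound and worth comparing with the paper. The gcd/lcm observation in the easy range is correct (in fact at the boundary $\theta=1-\epsilon$ the gap bound already forces at most one divisor in the interval), and it is the same elementary mechanism the paper recalls in its last section ($h>d^{2}/n$ for consecutive divisors). Your reduction of general $\theta<\tfrac12$ to $\theta=\tfrac12$ by multiplying by a prime $p\nmid n$ with $p\in(n^{1-2\theta},n^{1-2\theta}(1+n^{-\eta})]$ is plausible and can be made rigorous with known results on primes in short intervals (the interval has length $n^{1-2\theta-\eta}$, far more primes than the $O_{\theta}(1)$ prime divisors of $n$ of that size), and combined with the functional equation for $\theta>\tfrac12$ it would show that Conjectures~\ref{con:1} and~\ref{con:2} are equivalent --- a tidy observation that the paper does not record, and which explains why the paper treats Conjecture~\ref{con:2} as a strengthening ``in spirit'' rather than in substance. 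But be careful to state what you are actually using: you need Conjecture~\ref{con:1} for the constructed modulus $N=np$ with an $\epsilon'$ depending on $\theta,\epsilon$, and you must absorb the constant in the interval length $CN^{1/2-\epsilon'}$ (say by shrinking $\epsilon'$), and handle the inexact left endpoint after applying the functional equation; these are routine but should be written out. The ``hard core'' paragraph is a reasonable survey of why the $\theta=\tfrac12$ case resists current technique, but it is commentary, not proof; if you want to contribute something verifiable toward the conjecture, the paper's route --- Lemma~\ref{lem:1} plus the factorization argument behind Theorem~\ref{thm:1}, or the sieve inequality \eqref{ine_ls} --- is where nontrivial unconditional statements currently live.
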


We will use the functions
$$
\omega(n):=\sum_{p \mid n}1 \quad \mbox{and} \quad V(n):=\max_{p^{\beta} \| n}\beta.
$$

\section{Main results}

The first theorem encapsulates our best result that holds in full generality when $Y \le X^{1-\epsilon}$.

\begin{thm}\label{thm:1}
Let $\eta$ and $\theta$ be fixed real numbers such that $0 < \eta < \theta < 1$. For each integer $n \ge 2$, we have
$$
D_{n}(n^{\theta},n^{\eta}) \ll \tau(n)^{1-\xi(\theta,\eta)}\frac{V(n)\log \tau(n)}{\theta(1-\theta)}
$$
where
\begin{equation}\label{res_1}
\xi(\theta,\eta):=\left\{\begin{array}{ll}
1 & \mbox{if}\ \eta \le \theta^{2}\\
\frac{\theta^{2}}{\eta} & \mbox{if}\ \eta > \theta^{2},\ \theta \le \frac{1}{2}\ \mbox{and}\ 2\eta \le \theta\\
4(\theta-\eta) & \mbox{if}\ \eta > \theta^{2},\ \theta \le \frac{1}{2}\ \mbox{and}\ 2\eta > \theta\\
\frac{(1-\theta)^{2}}{(1-\theta)^{2}+\eta-\theta^{2}} & \mbox{if}\ \eta > \theta^{2},\ \theta > \frac{1}{2}\ \mbox{and}\ 2\eta \le 3\theta-1\\
4(\theta-\eta) & \mbox{if}\ \eta > \theta^{2},\ \theta > \frac{1}{2}\ \mbox{and}\ 2\eta > 3\theta-1.\\
\end{array}\right.
\end{equation}
\end{thm}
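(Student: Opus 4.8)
The plan is to bound $D_{n}(n^{\theta},n^{\eta})$ by counting, for each divisor $d$ in the interval $[n^{\theta},n^{\theta}+n^{\eta}]$, its prime factorization relative to that of $n$, and to exploit the fact that two divisors in such a short interval must have nearly identical "shape". Write $n = \prod_{i} p_{i}^{\beta_{i}}$, so a divisor $d = \prod_{i} p_{i}^{a_{i}}$ with $0 \le a_{i} \le \beta_{i}$ is determined by the exponent vector $(a_{i})$. If $d$ and $d'$ both lie in the interval, then $|\log(d/d')| \le n^{\eta}/n^{\theta} \asymp n^{\eta-\theta}$, i.e. $\sum_{i}(a_{i}-a_{i}')\log p_{i}$ is very small in absolute value. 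I would first reduce to the case where all $\beta_{i}=1$ (so $n$ is squarefree and $\tau(n)=2^{\omega(n)}$) by a standard "pulling out the $V(n)$" argument: each prime power $p^{\beta}\|n$ contributes at most a factor $\beta+1 \le V(n)+1$ to the divisor count once we fix which primes divide $d$, and more carefully one shows the full divisor count is at most $V(n)^{O(1)}$ (absorbed in the stated $V(n)\log\tau(n)$ factor) times the squarefree analogue. So it suffices to treat $n$ squarefree with $\tau=2^{\omega}$, $\omega=\omega(n)$, and show $D_{n}(n^{\theta},n^{\eta}) \ll 2^{\omega(1-\xi)}\cdot(\text{poly factors})$.

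For the squarefree case the key device is a second-moment / additive-energy argument on the set of logarithms $\{\log p : p\mid n\}$. Fixing a reference divisor $d_{0}$ in the interval, every other divisor $d$ satisfies $\log(d/d_{0}) = \sum_{i}\varepsilon_{i}\log p_{i}$ with $\varepsilon_{i}\in\{-1,0,1\}$ and $|\sum_{i}\varepsilon_{i}\log p_{i}| \ll n^{\eta-\theta}$; write $S_{+}=\{i:\varepsilon_{i}=1\}$, $S_{-}=\{i:\varepsilon_{i}=-1\}$, so $\prod_{S_{+}}p_{i}$ and $\prod_{S_{-}}p_{i}$ are two divisors of $n$ that are extremely close multiplicatively, both of size roughly $n^{\gamma}$ for some $\gamma$ one can take in a controlled range (here is where $\theta \le 1/2$ vs $\theta > 1/2$ and the various thresholds on $\eta$ enter: the optimal choice of the auxiliary exponent $\gamma$, and whether one works with $d$ directly or with $n/d$, depends on these). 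The number of divisors of $n$ of size in $[n^{\gamma-o(1)},n^{\gamma+o(1)}]$ is, by the entropy/Chernoff bound for binomial coefficients, at most $\sum_{|j-\gamma\omega|\le o(\omega)}\binom{\omega}{j} \ll 2^{\omega H(\gamma)}\cdot\text{poly}$ where $H$ is binary entropy — but one can do much better using the near-equality of the two products: the number of pairs $(A,B)$ of disjoint subsets with $\prod_{A}p_{i} \approx \prod_{B}p_{i}$ is governed by an additive-energy bound, and an application of Cauchy–Schwarz (or a direct counting of solutions to $\sum\varepsilon_{i}\log p_{i}\approx 0$) yields a saving. I would aim to show that the count of such "balanced near-cancellations" is $\ll 2^{\omega(1-\xi)}$ with $\xi$ exactly the piecewise function in \eqref{res_1}, by optimizing the free parameter $\gamma$ (equivalently the split point) in each regime; the four non-trivial branches correspond to the four corners of the feasible region for $\gamma$ given $(\theta,\eta)$.

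The main obstacle will be the optimization itself — matching the elementary counting bound to the precise exponents $\theta^{2}/\eta$, $4(\theta-\eta)$, and $(1-\theta)^{2}/((1-\theta)^{2}+\eta-\theta^{2})$ in the four cases, which requires carefully choosing the length of the auxiliary product $\prod_{S_{\pm}}p_{i}$ (roughly, one wants it as large as the constraint $|\sum\varepsilon_{i}\log p_{i}| \ll n^{\eta-\theta}$ permits, balanced against the cost $2^{\omega H(\cdot)}$ of the entropy bound), and deciding in each regime whether to apply the argument to $d$ or to its complementary divisor $n/d$ via the symmetry $D_{n}(X,Y)=D_{n}(n/(X+Y),Yn/(X(X+Y)))$ noted in the introduction. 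A secondary technical point is making the "two divisors of $n$ that are multiplicatively within $n^{\eta-\theta}$" step genuinely save a power of $\tau(n)$: this needs either a bound of the form "$n$ has few pairs of divisors in a short ratio" (a dispersion estimate) or an averaging over the choice of reference divisor $d_{0}$, and one must check the error terms there do not overwhelm the saving $\xi$. Once the squarefree count is in hand, reinstating the $V(n)$ factor and the $\log\tau(n)$ (coming from summing over $O(\log\tau(n))$ dyadic scales of $\gamma$, or from the number of choices of $d_{0}$) and the $1/(\theta(1-\theta))$ (from the width of the admissible $\gamma$-window shrinking near the endpoints) is routine bookkeeping.
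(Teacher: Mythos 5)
Your proposal has genuine gaps at exactly the points where the theorem is hard, and two of its structural reductions do not hold as stated. First, the reduction to squarefree $n$ ``at the cost of $V(n)^{O(1)}$'' is unjustified and in general false: divisors of $n$ in the interval are not divisors of the radical, and the discrepancy between $\tau(n)=\prod_i(\beta_i+1)$ and $2^{\omega(n)}$ is not $V(n)^{O(1)}$ but can be as large as a fixed power of $\tau(n)$ (e.g.\ $n=\prod p_i^3$ gives $2^{\omega(n)}=\tau(n)^{1/2}$), so a squarefree-case bound of the form $2^{\omega(1-\xi)}$ does not transfer to the stated $\tau(n)^{1-\xi}V(n)\log\tau(n)$ bound. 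Second, your entropy/binomial estimate for the number of divisors of size about $n^{\gamma}$ tacitly assumes that such a divisor has about $\gamma\,\omega(n)$ prime factors; this fails badly when the primes dividing $n$ have very uneven sizes, which is the generic situation. Third, and most importantly, the step that is supposed to produce the saving --- the ``additive-energy / Cauchy--Schwarz'' count of near-cancellations $\sum_i\varepsilon_i\log p_i\approx 0$ --- is not carried out at all, and you yourself flag that matching it to the exact exponents $\theta^{2}/\eta$, $4(\theta-\eta)$, $(1-\theta)^{2}/((1-\theta)^{2}+\eta-\theta^{2})$ is ``the main obstacle.'' That optimization is not routine bookkeeping; it is the content of the theorem, and nothing in the sketch indicates how an energy bound would yield these precise piecewise values.

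For contrast, the paper's proof runs through different machinery, none of which appears in your sketch. The engine is Cohen's inequality $[d_1,\dots,d_k]^{t(t+1)/2}\prod_{i<j}(d_i,d_j)\ge\prod_i d_i^{t}$ (Lemma \ref{lem:1}): since all the $d_i$ lie in an interval of length $n^{\theta^2-\epsilon}$, each $(d_i,d_j)$ is at most that length, and optimizing $t\approx\theta k$ shows that only $O\bigl(\theta(1-\theta)/\epsilon+1/(\theta(1-\theta))\bigr)$ divisors can lie in such a very short interval (Proposition \ref{prop:1}). For longer intervals $n^{\eta}$ with $\eta>\theta^2$, the paper chooses a coprime factorization $n=ab$ via the rearrangement Lemma \ref{lem:2} --- precisely the device that handles uneven prime sizes without any squarefree reduction --- so that $\tau(b)\ll V(n)\tau(n)^{1-\xi(\theta,\eta)}$ while $a=n^{\alpha}$ with $\alpha$ at most the quantity $\alpha(\theta,\eta,\delta)$ of Proposition \ref{prop:3}; then the identity $D_n(X,Y)=\sum_{e\mid b}D_a(X/e,Y/e)$ reduces each term to Proposition \ref{prop:1} applied to $a$, because the inequality $((\theta-t)/\alpha)^2-((\eta-t)/\alpha)\ge(\theta(1-\theta))^{2}/\log\tau(n)$ holds for all relevant shifts $t$. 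The piecewise function $\xi(\theta,\eta)$ arises exactly as the solution of this elementary optimization over $\alpha$ (Propositions \ref{prop:2} and \ref{prop:3}), not from an entropy or additive-energy computation. To salvage your approach you would need, at a minimum, a substitute for Cohen's lemma that converts ``many divisors in a short interval'' into a contradiction with controlled gcd's, plus an argument that works directly with unequal prime sizes and non-squarefree $n$; as written, the proposal does not constitute a proof.
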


A relaxed version of Conjecture \ref{con:2} would involve asking for a larger region in which $\xi(\theta,\eta)=1$ holds, see Proposition \ref{prop:1} below. In the opposite direction, the second theorem is inspired by a simple idea that can be found on the first page of \cite{pe}.

\begin{thm}\label{thm:2}
Let $0 < \theta < 1$ and $0 < \epsilon < \theta$ be fixed real numbers. If Conjecture \ref{con:2} holds, then we have
$$
k_{\epsilon}(\theta) \gg \sqrt{\epsilon}\bigl(\theta(1-\theta)\bigr)^{3/2}\Bigl(\frac{1}{\theta^{\theta}(1-\theta)^{1-\theta}}\Bigr)^{\frac{1}{\epsilon}}.
$$
\end{thm}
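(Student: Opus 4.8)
The plan is to reverse–engineer a single integer $n$ for which $D_{n}(n^{\theta},n^{\theta-\epsilon})$ is already as large as the claimed expression; since Conjecture~\ref{con:2} would give $D_{n}(n^{\theta},n^{\theta-\epsilon})\le k_{\epsilon}(\theta)$ for \emph{every} $n$, one such example yields the lower bound at once. Using the symmetry relation $D_{n}(X,Y)=D_{n}\bigl(\tfrac{n}{X+Y},\tfrac{Yn}{X(X+Y)}\bigr)$ from the introduction and the invariance of the target under $\theta\mapsto 1-\theta$, I would first reduce to $\theta\le\tfrac12$. The construction, in the spirit of the elementary idea on the first page of \cite{pe}, is as follows: fix a large parameter $T$, a small $\delta\in(0,1)$, and integers $j<\theta/\epsilon$ and $r<1/\epsilon$ to be chosen; pick $r$ distinct primes $p_{1},\dots,p_{r}$ in $(T,T+T^{1-\delta}]$ (possible for $T$ large, since there are $\sim T^{1-\delta}/\log T$ of them), put $n_{0}=p_{1}\cdots p_{r}$, and set $n=Q\,n_{0}$ for a well–chosen auxiliary integer $Q$. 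Every $j$‑fold product $d_{S}=\prod_{i\in S}p_{i}$ with $|S|=j$ divides $n$; there are $\binom{r}{j}$ of them, they are pairwise distinct, and all of them lie in the interval $[\,T^{j},\,T^{j}e^{jT^{-\delta}}\,]$, whose logarithmic length is at most $jT^{-\delta}$.

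The crucial step is to place this cluster exactly inside $[n^{\theta},n^{\theta}+n^{\theta-\epsilon}]$. Taking $Q=\lfloor e^{m}\rfloor$ with $m=\tfrac1\theta\bigl(j\log T-\theta\log n_{0}\bigr)$ (an integer, not a prime — that $Q$ need not be coprime to $n_{0}$ is irrelevant, since the $d_{S}$ divide $n_{0}$ anyway) makes $\theta\log n=j\log T-\beta$ with $0\le\beta\le 2\theta e^{-m}$; then $n^{\theta}\le T^{j}$, each $d_{S}$ satisfies $\log d_{S}\ge j\log T\ge\theta\log n$, and the whole cluster sits below $n^{\theta}+n^{\theta-\epsilon}$ as soon as $jT^{-\delta}+\beta\le\log(1+n^{-\epsilon})$. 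Since $\log n=\tfrac j\theta\log T-\beta$ gives $n^{-\epsilon}\ge T^{-\epsilon j/\theta}$, this holds for $T$ large once $\delta>\epsilon j/\theta$ (so $jT^{-\delta}=o(T^{-\epsilon j/\theta})$) and $e^{-m}$ is small enough, the latter being guaranteed by imposing $j-\theta r\ge(1+\eta)\epsilon j$ for some fixed $\eta>0$ (so $m\ge(1+\eta)\tfrac{\epsilon j}\theta\log T$). Concretely I would take $j:=\lceil\theta/\epsilon\rceil-1$ (the largest integer with $\epsilon j/\theta<1$), $r$ the largest integer with $\theta r\le(1-(1+\eta)\epsilon)j$, and $\delta\in(\epsilon j/\theta,1)$; one checks $j\ge1$, $j\le r$, $r<1/\epsilon$, and $r\ge 1/\epsilon-1/\theta-O(1)$ (a handful of degenerate cases, e.g.\ $r\le1$, occurring only when $\theta,\epsilon$ are both within $O(\eta)$ of $\tfrac12$, are disposed of directly, the target being then of bounded order). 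This gives $k_{\epsilon}(\theta)\ge D_{n}(n^{\theta},n^{\theta-\epsilon})\ge\binom{r}{j}$.

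It remains to estimate $\binom{r}{j}$ from below. Here $j/r=\theta+u$ with $0\le ur=j-\theta r=O(\theta)$, so the natural–entropy values at $j/r$ and at $\theta$ differ by $O(1)$ and $(j/r)(1-j/r)\asymp\theta(1-\theta)$; Stirling's formula then yields $\binom{r}{j}\gg\frac1{\sqrt{r\,\theta(1-\theta)}}\bigl(\theta^{\theta}(1-\theta)^{1-\theta}\bigr)^{-r}$ with an absolute constant (the case $j/r>\tfrac12$, which can only happen by $O(\epsilon)$, being harmless by the Taylor expansion of $x^{x}(1-x)^{1-x}$ at $\tfrac12$). Applying $r<1/\epsilon$ to the prefactor and $r\ge 1/\epsilon-1/\theta-O(1)$ to the exponential, and using the elementary facts $\bigl(\theta^{\theta}(1-\theta)^{1-\theta}\bigr)^{1/\theta}=\theta(1-\theta)^{(1-\theta)/\theta}\asymp\theta$ and $\theta^{2\theta}\gg1$ for $\theta\le\tfrac12$, one gets $\binom{r}{j}\gg\sqrt{\epsilon}\,\sqrt{\theta/(1-\theta)}\,\bigl(\theta^{\theta}(1-\theta)^{1-\theta}\bigr)^{-1/\epsilon}\ge\sqrt{\epsilon}\,(\theta(1-\theta))^{3/2}\bigl(\theta^{\theta}(1-\theta)^{1-\theta}\bigr)^{-1/\epsilon}$, which is the assertion.

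The main obstacle is the alignment step. One must fit a cluster of $\binom{r}{j}$ divisors — spread over a window that cannot be made narrower than about $jT^{-\delta}$ — entirely inside an interval of relative width only $n^{-\epsilon}$ around the prescribed point $n^{\theta}$. This forces $\delta$ to lie strictly between $\epsilon j/\theta$ and $1$, hence $j<\theta/\epsilon$ and therefore $r\approx1/\epsilon$, which is exactly what caps the number of divisors at roughly $\binom{1/\epsilon}{\theta/\epsilon}$ and produces the factor $\bigl(\theta^{\theta}(1-\theta)^{1-\theta}\bigr)^{-1/\epsilon}$; the auxiliary integer $Q$ together with letting $T\to\infty$ is what absorbs all the approximation errors (the non‑integrality of $\theta r$, the discrepancy $\log Q\ne m$, and the primes not being exactly $T$) well below the threshold $n^{-\epsilon}$. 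Everything else is routine Stirling–type bookkeeping.
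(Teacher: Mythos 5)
Your construction is essentially the paper's own proof: an integer of the form $Q\cdot p_{1}\cdots p_{r}$ with the $r$ primes tightly clustered near a large $T$, so that all $\binom{r}{j}$ products of $j\approx\theta r$ of them land in $[n^{\theta},n^{\theta}+n^{\theta-\epsilon}]$, followed by the same Stirling/entropy estimate with $r\approx 1/\epsilon$; the paper differs only in cosmetic details (primes in an interval of length $(\log M)^{2}$, the multiplier chosen as a maximal multiple rather than $\lfloor e^{m}\rfloor$, a redone rather than symmetry-reduced construction for $\theta>\frac12$ since the window flips to the left of $n^{1-\theta}$, and an explicit proposition giving $k_{\epsilon}(\theta)\ge1$ for the degenerate ranges). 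One caveat: your parenthetical that $(T,T+T^{1-\delta}]$ contains $\sim T^{1-\delta}/\log T$ primes is not known for $\delta$ near $1$, but since a single suitable $T$ suffices for the lower bound, an averaging appeal to the Prime Number Theorem (which is also what the paper implicitly uses for its even shorter interval) repairs this.
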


\section{Preliminary lemmas}

\begin{lem}\label{lem:1}
Let $d_{1},\dots,d_{k}$ be positive integers. Then, for all $t \in \mathbb{Z}$, we have
$$
[d_{1},\dots,d_{k}]^{\frac{t(t+1)}{2}}\prod_{1 \le i < j \le k}(d_{i},d_{j}) \ge \prod_{1 \le i \le k}d^{t}_{i}
$$
where $[d_{1},\dots,d_{k}]$ denotes the least common multiple of $d_{1},\dots,d_{k}$ and where $(d_{i},d_{j})$ denotes the greatest common divisor of $d_{i}$ and $d_{j}$.
\end{lem}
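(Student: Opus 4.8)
The plan is to work one prime at a time, since all quantities involved — the least common multiple, the pairwise greatest common divisors, and the product $\prod_i d_i^t$ — are multiplicative in the appropriate sense, so it suffices to check the inequality on $p$-adic valuations for each prime $p$. Fix a prime $p$ and write $a_i := v_p(d_i)$ for $1 \le i \le k$. After relabelling we may assume $a_1 \ge a_2 \ge \cdots \ge a_k \ge 0$. Then $v_p([d_1,\dots,d_k]) = a_1$, while $v_p((d_i,d_j)) = \min(a_i,a_j) = a_j$ for $i < j$ (using the ordering), so $v_p\bigl(\prod_{i<j}(d_i,d_j)\bigr) = \sum_{j=1}^{k}(j-1)a_j$. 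Thus the claim reduces to the purely numerical inequality
$$
\frac{t(t+1)}{2}\,a_1 + \sum_{j=1}^{k}(j-1)a_j \ \ge\ t\sum_{j=1}^{k}a_j
$$
for every $t \in \mathbb{Z}$ and every nonincreasing sequence $a_1 \ge \cdots \ge a_k \ge 0$.

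The key step is to prove this numerical inequality. The cleanest route is Abel summation / summation by parts on the nonincreasing sequence: writing $b_j := a_j - a_{j+1} \ge 0$ for $j < k$ and $b_k := a_k \ge 0$, one has $a_j = \sum_{\ell \ge j} b_\ell$, and both sides become nonnegative linear combinations of the $b_\ell$'s. Concretely, $\sum_{j=1}^{k} a_j = \sum_{\ell=1}^{k} \ell\, b_\ell$ and $\sum_{j=1}^{k}(j-1)a_j = \sum_{\ell=1}^{k}\binom{\ell}{2} b_\ell$ and $a_1 = \sum_{\ell=1}^{k} b_\ell$, so the inequality becomes $\sum_{\ell=1}^{k} b_\ell\bigl(\tfrac{t(t+1)}{2} + \binom{\ell}{2} - t\ell\bigr) \ge 0$. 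Since each $b_\ell \ge 0$, it suffices to show each coefficient is nonnegative, i.e. $\frac{t(t+1)}{2} + \frac{\ell(\ell-1)}{2} - t\ell \ge 0$, equivalently $t^2 + t + \ell^2 - \ell - 2t\ell \ge 0$, i.e. $(t-\ell)^2 + (t-\ell) \ge 0$, i.e. $(t-\ell)(t-\ell+1) \ge 0$. This last inequality holds for every integer $t$ and every integer $\ell$, because the product of two consecutive integers is always nonnegative.

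The main obstacle — really the only one — is spotting the right normalization so that the quadratic in $t$ (and in $\ell$) collapses to the product of consecutive integers $(t-\ell)(t-\ell+1)$; once that is in hand, integrality of $t$ does all the work and there is nothing left to estimate. One should also note at the outset why it is enough to handle a single prime and a monotone arrangement of exponents: the set $\{1 \le i < j \le k\}$ is permutation-symmetric, so reordering the $d_i$ changes neither side, and both the exponent of the lcm and the multiset of pairwise gcd-exponents depend only on the multiset $\{a_1,\dots,a_k\}$. After verifying the reduction and the displayed coefficient identity by a direct expansion, the proof is complete.
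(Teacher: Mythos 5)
Your proof is correct. Note that the paper does not actually prove Lemma \ref{lem:1}: it simply cites Corollaire 1.4 of Cohen's paper \cite{hc}, so there is no in-text argument to compare against. Your prime-by-prime reduction is the natural (and almost certainly the intended) route: after sorting the valuations $a_1\ge\cdots\ge a_k\ge 0$, the exponent of the lcm is $a_1$ and the exponent of $\prod_{i<j}(d_i,d_j)$ is $\sum_j (j-1)a_j$, and your Abel-summation step correctly converts the claim into nonnegativity of the coefficients $\tfrac{t(t+1)}{2}+\binom{\ell}{2}-t\ell=\tfrac{(t-\ell)(t-\ell+1)}{2}$, which holds for all integers $t$ and $\ell$ because a product of consecutive integers is nonnegative. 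All the intermediate identities check out ($\sum_j a_j=\sum_\ell \ell b_\ell$, $\sum_j (j-1)a_j=\sum_\ell\binom{\ell}{2}b_\ell$, $a_1=\sum_\ell b_\ell$), and the argument covers negative $t$ as well, which the lemma requires. The only thing your write-up buys beyond the paper is self-containedness, which is a genuine plus here since the cited reference is a somewhat obscure seminar exposé; it also makes transparent exactly where integrality of $t$ is used, namely in the final step $(t-\ell)(t-\ell+1)\ge 0$, which would fail for real $t$ strictly between $\ell-1$ and $\ell$.
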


\begin{proof}
This is Corollaire $1.4$ of \cite{hc}.
\end{proof}

\begin{lem}\label{lem:2}
Let $(x_{i},y_{i})$ for $i=1,\dots,k$ be pairs of positive real numbers that satisfy $\sum_{i=1}^{k}x_{i}\le\sum_{i=1}^{k}y_{i}$. There exists a permutation $\sigma=(\sigma_{1},\dots,\sigma_{k})$ such that for each $1 \le s \le k$,
$$
\sum_{i=1}^{s}x_{\sigma_{i}} \le \sum_{i=1}^{s}y_{\sigma_{i}}.
$$
\end{lem}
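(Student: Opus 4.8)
The statement is a purely combinatorial rearrangement fact: given pairs $(x_i, y_i)$ with $\sum x_i \le \sum y_i$, we can order the indices so that every prefix sum of the $x$'s stays below the corresponding prefix sum of the $y$'s. The plan is to build the permutation greedily \emph{from the back}. Set $d_i := y_i - x_i$, so that $\sum_{i=1}^k d_i \ge 0$, and we want an ordering in which every prefix sum $\sum_{i=1}^s d_{\sigma_i} \ge 0$. I would argue by induction on $k$. For the inductive step, pick an index $m$ minimizing $d_m$ (equivalently, with the most negative deficit, or if all $d_i \ge 0$ any index works) and place it \emph{last}, i.e. set $\sigma_k = m$. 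Then I need the remaining $k-1$ indices, taken in the inductively-produced order, to have all nonnegative prefix sums.

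The key point that makes this work: after removing $m$, the remaining pairs still satisfy the hypothesis of the lemma. Indeed $\sum_{i \ne m} d_i = \bigl(\sum_i d_i\bigr) - d_m \ge -d_m$. If $d_m \ge 0$ this is $\ge$ a nonnegative number minus a nonnegative number — wait, that's not immediately $\ge 0$; here I use that $m$ was chosen to \emph{minimize} $d_m$. If every $d_i \ge 0$ the claim is trivial at every stage, so assume some $d_i < 0$; then $d_m < 0$ (it's the minimum), hence $\sum_{i\ne m} d_i \ge \sum_i d_i \ge 0$ — actually even simpler, $\sum_{i \ne m} d_i = \sum_i d_i - d_m \ge 0 - d_m > 0$. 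So the reduced family of $k-1$ pairs satisfies $\sum_{i \ne m} x_i \le \sum_{i \ne m} y_i$, the induction hypothesis applies, and we get an ordering $\sigma_1, \dots, \sigma_{k-1}$ of the remaining indices with nonnegative prefix sums. Appending $\sigma_k = m$ at the end: the prefix sums for $s \le k-1$ are exactly the inductively good ones, and the prefix sum for $s = k$ equals $\sum_{i=1}^k d_i \ge 0$ by hypothesis. This closes the induction; the base case $k=1$ is just the hypothesis $x_1 \le y_1$.

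I do not expect any serious obstacle here — the only subtlety is getting the direction of the greedy choice right (remove the \emph{worst} element and put it \emph{last}, rather than the best element first, which would fail) and correctly justifying that the hypothesis is inherited by the $(k-1)$-element subfamily, which is where the minimality of $d_m$ is used. One should also phrase the argument cleanly in the degenerate case where all $d_i \ge 0$ (any order works) to avoid dividing into cases awkwardly; handling it uniformly via ``choose $m$ with $d_m$ minimal'' covers both situations at once. An alternative, non-inductive route would be to take $\sigma$ to be \emph{any} permutation maximizing $\sum_{s=1}^k \min\bigl(0, \sum_{i=1}^s d_{\sigma_i}\bigr)$ (or minimizing the total negative shortfall) and derive a contradiction from an adjacent transposition if some prefix were negative, but the backward greedy induction is shorter and more transparent.
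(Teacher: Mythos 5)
Your proof is correct. The key inductive step is sound: choosing $m$ with $d_m:=y_m-x_m$ minimal and placing it last guarantees that the remaining $k-1$ pairs inherit the hypothesis $\sum_{i\ne m}d_i\ge 0$ (either some $d_i<0$, in which case $d_m<0$ and subtracting it only increases the sum, or all $d_i\ge 0$ and the claim is trivial), and the full prefix sum at $s=k$ is handled directly by the hypothesis. Note that the paper itself offers no argument here --- it simply cites Lemma 7 of the author's preprint \cite{pl} --- so your self-contained backward-greedy induction is a welcome replacement rather than a deviation from the paper's method; it is the standard proof of this rearrangement fact and, as you observe, does not even need the positivity of the $x_i,y_i$.
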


\begin{proof}
This is Lemma 7 from \cite{pl}
\end{proof}

\begin{lem}\label{lem:3}
Suppose $\mathcal{M}$ is a set of integers contained in an interval of length $N \ge 1$, and assume that the elements of $\mathcal{M}$ avoid a set $\Omega_{\mathfrak{p}}\neq \mathbb{Z}/\mathfrak{p}\mathbb{Z}$ of congruence classes for every $\mathfrak{p}:=p^{\beta_{p}}\in \mathcal{P}$, a set of powers of distinct primes. Then we have
$$
|\mathcal{M}| \le \frac{N+Q^{2}}{H}
$$
for any $Q \ge 1$, where
$$
H=\sum_{q \le Q}h(q)
$$
and where $h(q)$ is the multiplicative function defined for $\beta \ge 1$ as
$$
h(p^{\beta}):=\left\{\begin{array}{ll}\frac{|\Omega_{\mathfrak{p}}|}{\mathfrak{p}-|\Omega_{\mathfrak{p}}|} & \mbox{if}\ p^{\beta}=\mathfrak{p}\in \mathcal{P},\\
0 & \mbox{otherwise}.
\end{array}\right.
$$
\end{lem}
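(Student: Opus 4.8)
The plan is to run Montgomery's form of the large sieve, attaching the local factors directly to the prime powers $\mathfrak{p}\in\mathcal{P}$ and feeding the prime-power condition in through a telescoping identity rather than a single Parseval step. Write $e(t)=e^{2\pi it}$, $S(\alpha)=\sum_{n\in\mathcal{M}}e(n\alpha)$ (so $S(0)=|\mathcal{M}|$), and, for $q\ge1$, $g(q)=\sum_{a\bmod q,\ (a,q)=1}|S(a/q)|^{2}\ge0$, with $g(1)=|\mathcal{M}|^{2}$. If $\mathcal{M}=\emptyset$ the assertion is trivial, so assume $\mathcal{M}\ne\emptyset$; we may also discard from $\mathcal{P}$ every $\mathfrak{p}>Q$, since such a $\mathfrak{p}$ divides no $q\le Q$ and so changes neither $H$ nor the claim, leaving finitely many prime powers $\mathfrak{p}_1,\dots,\mathfrak{p}_r$ with $\mathfrak{p}_i=p_i^{\beta_i}$. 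The analytic ingredient is the Montgomery--Vaughan large sieve inequality: the fractions $a/q$ with $(a,q)=1$, $q\le Q$ are pairwise $Q^{-2}$-separated in $\mathbb{R}/\mathbb{Z}$, so $\sum_{q\le Q}g(q)\le(N+Q^{2})|\mathcal{M}|$. It remains to bound $\sum_{q\le Q}g(q)$ from below by $H|\mathcal{M}|^{2}$.

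The first step is a local estimate at one prime power: if $p$ is prime, $\beta\ge1$ is such that $\mathcal{M}$ avoids $\omega$ classes mod $p^{\beta}$, and $(q',p)=1$, then
$$\sum_{j=1}^{\beta}g\bigl(p^{j}q'\bigr)\ \ge\ \frac{\omega}{p^{\beta}-\omega}\,g(q').$$
To prove it, fix a reduced residue $c$ mod $q'$; by the Chinese Remainder Theorem $g(p^{j}q')=\sum_{(c,q')=1}\sum_{p\nmid b\bmod p^{j}}|\psi_{c}(b/p^{j})|^{2}$ where $\psi_{c}(\alpha)=S(\alpha+c/q')$. Grouping the $n\in\mathcal{M}$ by their class mod $p^{j}$ realizes $b\mapsto\psi_{c}(b/p^{j})$ as the finite Fourier transform on $\mathbb{Z}/p^{j}\mathbb{Z}$ of $\phi^{(j)}_{c}(r):=\sum_{n\equiv r\ (p^{j})}e(nc/q')$, so Parseval gives $\sum_{b\bmod p^{j}}|\psi_{c}(b/p^{j})|^{2}=p^{j}\sum_{r}|\phi^{(j)}_{c}(r)|^{2}$, while the $b$ with $p\mid b$ contribute $p^{j-1}\sum_{r}|\phi^{(j-1)}_{c}(r)|^{2}$. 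Summing the difference over $1\le j\le\beta$ telescopes to $p^{\beta}\sum_{r}|\phi^{(\beta)}_{c}(r)|^{2}-|\psi_{c}(0)|^{2}$; since $\phi^{(\beta)}_{c}$ is supported on the $\le p^{\beta}-\omega$ classes occupied by $\mathcal{M}$, Cauchy--Schwarz against $\sum_{r}\phi^{(\beta)}_{c}(r)=\psi_{c}(0)$ bounds this below by $\bigl(\tfrac{p^{\beta}}{p^{\beta}-\omega}-1\bigr)|\psi_{c}(0)|^{2}$; summing over $c$ and using $\psi_{c}(0)=S(c/q')$ gives the estimate. Applied with $p^{\beta}=\mathfrak{p}_i$ and $\omega=|\Omega_{\mathfrak{p}_i}|$, the factor becomes $h_i:=|\Omega_{\mathfrak{p}_i}|/(\mathfrak{p}_i-|\Omega_{\mathfrak{p}_i}|)$.

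The second step bootstraps this multiplicatively while respecting the cutoff. For $0\le s\le r$ and $Y\ge1$ set $G_{s}(Y)=\sum g\bigl(\prod_{i\le s}p_{i}^{j_{i}}\bigr)$ (sum over $0\le j_{i}\le\beta_{i}$ with $\prod_{i\le s}p_{i}^{j_{i}}\le Y$) and $H_{s}(Y)=\sum_{S\subseteq\{1,\dots,s\},\ \prod_{i\in S}\mathfrak{p}_{i}\le Y}\prod_{i\in S}h_{i}$, so $G_{0}(Y)=|\mathcal{M}|^{2}$ and $H_{0}(Y)=1$. Splitting $G_{s}(Y)$ by the exponent $j_{s}$: the $j_{s}=0$ part is $G_{s-1}(Y)$, and on the sub-block $\prod_{i<s}p_{i}^{j_{i}}\le Y/\mathfrak{p}_{s}$ every $j_{s}=1,\dots,\beta_{s}$ is admissible, so the local estimate (with $q'=\prod_{i<s}p_{i}^{j_{i}}$, coprime to $p_{s}$) applies; discarding the remaining nonnegative terms yields $G_{s}(Y)\ge G_{s-1}(Y)+h_{s}\,G_{s-1}(Y/\mathfrak{p}_{s})$, and since $H_{s}(Y)=H_{s-1}(Y)+h_{s}H_{s-1}(Y/\mathfrak{p}_{s})$ an induction on $s$ gives $G_{s}(Y)\ge H_{s}(Y)|\mathcal{M}|^{2}$. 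Now take $s=r$, $Y=Q$: distinct tuples $(j_i)$ produce distinct integers $\prod p_{i}^{j_{i}}\le Q$ and $g\ge0$, so $G_{r}(Q)\le\sum_{q\le Q}g(q)$; and $h(q)\ne0$ forces $q=\prod_{i\in S}\mathfrak{p}_{i}$, whence $H_{r}(Q)=\sum_{q\le Q}h(q)=H$. Therefore $H|\mathcal{M}|^{2}\le G_{r}(Q)\le\sum_{q\le Q}g(q)\le(N+Q^{2})|\mathcal{M}|$, and dividing by $|\mathcal{M}|$ gives the lemma.

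The crux is the prime-power local estimate, and it is exactly where a verbatim copy of Montgomery's squarefree argument collapses: the ``primitive'' sum $\sum_{p\nmid b\bmod p^{\beta}}|S(b/p^{\beta})|^{2}$ taken alone can vanish even when $|\Omega_{\mathfrak{p}}|$ is as large as $\mathfrak{p}(1-1/p)$ (take $\mathcal{M}$ equidistributed mod $p^{\beta}$ inside a single class mod $p$), so one is forced to use $p,p^{2},\dots,p^{\beta}$ together and let the telescoping recover the factor $\mathfrak{p}/(\mathfrak{p}-|\Omega_{\mathfrak{p}}|)$. The remaining difficulty is purely organizational: the constraint $q\le Q$ couples the exponents across different primes, which is why the induction in the second step carries a free scale $Y$ and why at stage $s$ one only invokes the local estimate on the sub-block where the entire range $j_{s}=1,\dots,\beta_{s}$ stays admissible.
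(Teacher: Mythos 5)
Your proof is correct and is essentially the argument the paper invokes by citation: the paper proves this lemma by appealing to Theorem 7.14 and Lemma 7.15 of Iwaniec--Kowalski (the arithmetic large sieve, with the $\star$ reinterpreted as $\mathfrak{p}_i \nmid a$ for $q=\mathfrak{p}_1\cdots\mathfrak{p}_k$), which is exactly the combination you carry out: the Montgomery--Vaughan spacing inequality plus local Parseval/Cauchy--Schwarz bounds at each $\mathfrak{p}$, assembled multiplicatively. Your telescoping over the levels $p^{j}$, $1\le j\le\beta$, together with the scale-$Y$ induction, is just a regrouping of the same Farey points $a/\mathfrak{p}_1\cdots\mathfrak{p}_k$ with $\mathfrak{p}_i\nmid a$ that the cited argument uses, so the two routes coincide in substance, with yours written out in full detail.
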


\begin{proof}
This is essentially the last part of Theorem 7.14 from \cite{hi:ek}, with a slight generalization that allows considering powers of primes instead of only primes. The proof of this generalization is included in Lemma 7.15, assuming that $q=\mathfrak{p}_{1}\cdots\mathfrak{p}_{k}$, and that the $\star$ is now interpreted as $\mathfrak{p}_{i}\nmid a$ for $i=1,\dots,k$.
\end{proof}

In the next lemma, we use the Legendre symbol $\bigl(\frac{\cdot}{p}\bigr)$ for $p \ge 3$.

\begin{lem}\label{lem:4}
Let $p \ge 3$ be a prime number and $u \in \mathbb{Z}$ an integer not divisible by $p$. Let $S_{v}$ be the number of $x$ modulo $p^{v+1}$ for which the equation
$$
x^{2}+p^{v}u \equiv y^{2} \pmod{p^{v+1}}
$$
is unsolvable. Then $S_{0} = \frac{p-(\frac{-u}{p})}{2}$, $S_{1} = p$ and $S_{v} = pS_{v-2}$ for $v \ge 2$.
\end{lem}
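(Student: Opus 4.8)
The plan is to split the count $S_{v}$ according to whether $p\mid x$. If $p\nmid x$, then $x^{2}+p^{v}u$ is a unit modulo $p^{v+1}$ that is congruent to the nonzero square $x^{2}$ modulo $p$; since $p$ is odd, a unit which is a square modulo $p$ is a square modulo $p^{v+1}$ (lift successively), so the congruence $y^{2}\equiv x^{2}+p^{v}u\pmod{p^{v+1}}$ is solvable. Hence the residues $x$ with $p\nmid x$ never contribute to $S_{v}$, and everything reduces to the $p$ classes $x=pz$ with $p\mid x$. This one observation is the backbone of all three assertions.

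For $v=0$ I would argue directly modulo $p$. The equation $x^{2}+u\equiv y^{2}\pmod p$ is solvable exactly when $x^{2}+u$ is a square modulo $p$ (the value $0$ allowed), so $S_{0}$ counts the $x$ for which $x^{2}+u$ is a nonzero quadratic nonresidue. Letting $A$, $B$, $C=S_{0}$ denote the number of $x\bmod p$ for which $x^{2}+u$ is a nonzero residue, is $0$, or is a nonresidue respectively, one has $A+B+C=p$, $B=\#\{x:\ x^{2}\equiv -u\}=1+\bigl(\tfrac{-u}{p}\bigr)$, and $2A+B=p-1$ by counting the pairs $(x,y)$ with $(x-y)(x+y)\equiv -u\pmod p$ (each of the $p-1$ nonzero choices of $x-y$ determining $x+y$, hence $x$ and $y$). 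Solving this linear system gives $S_{0}=\frac{p-(\frac{-u}{p})}{2}$; equivalently one may just evaluate the standard sum $\sum_{x\bmod p}\bigl(\tfrac{x^{2}+u}{p}\bigr)=-1$.

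For $v=1$, substituting $x=pz$ yields $x^{2}+pu=p^{2}z^{2}+pu\equiv pu\pmod{p^{2}}$, which has $p$-adic valuation exactly $1$ and therefore is not congruent to any square modulo $p^{2}$ (a square mod $p^2$ is either a unit or $\equiv 0$). So each of the $p$ classes $x=pz$ is unsolvable, the units being all solvable, and $S_{1}=p$. For $v\ge 2$, again write $x=pz$; by the dichotomy of the first paragraph the congruence can hold only if $p\mid y$ as well, so put $y=pw$ and divide through by $p^{2}$, which is legitimate because $v\ge 2$ gives $p^{2}\mid p^{v}u$ and $p^{v+1}/p^{2}=p^{v-1}$. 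This produces $w^{2}\equiv z^{2}+p^{v-2}u\pmod{p^{v-1}}$, a condition depending only on $z$ modulo $p^{v-1}$. Each residue of $z$ modulo $p^{v-1}$ lifts to exactly $p$ residues of $z$ modulo $p^{v}$ (equivalently, $p$ residues of $x$ modulo $p^{v+1}$), and the number of bad residues of $z$ modulo $p^{v-1}$ is, by definition, $S_{v-2}$ (with the same $u$). Therefore $S_{v}=pS_{v-2}$.

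The only genuinely delicate points are the lifting step in the first paragraph — this is exactly where oddness of $p$ enters, and it disposes of all the $p\nmid x$ residues at once — and keeping the moduli straight in the reduction $S_{v}\mapsto S_{v-2}$, in particular remembering that $z$ ranges modulo $p^{v}$ while the reduced congruence only sees $z$ modulo $p^{v-1}$, which is precisely what creates the factor $p$. I do not expect any further obstacles.
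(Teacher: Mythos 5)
Your proof is correct and takes essentially the same route as the paper: the residues with $p\nmid x$ are shown to be always solvable (you via Hensel lifting of unit squares, the paper via the explicit lift $s=j$, $r\equiv i+(2j)^{-1}u \pmod p$), and the classes $x=pz$ are handled by forcing $p\mid y$ and dividing out $p^{2}$, giving $S_{1}=p$ and the recursion $S_{v}=pS_{v-2}$. Your $v=0$ count through the linear system in $A,B,C$ is just a repackaging of the paper's evaluation of the character sum $\sum_{x}\bigl(\tfrac{x^{2}+u}{p}\bigr)=-1$, which you yourself note as an equivalent argument.
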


\begin{proof}
For $v = 0$, we consider the function
$$
g(k):=\sum_{x=1}^{p}\Bigl(\frac{x^{2}+k}{p}\Bigr).
$$
From here, using changes of variable of the form $x=cz$ with $c \not\equiv 0 \pmod{p}$, we deduce that $g(k)$ only takes two values when $p \nmid k$ depending on the value of $\bigl(\frac{k}{p}\bigr)$. Thus, by summing over $k$ the functions $g(k)$ and $\bigl(\frac{k}{p}\bigr)g(k)$, we arrive at a linear system of two equations with two variables that allows us to evaluate $g(k)$ exactly. We find that $g(k)=-1$ for each $k \not\equiv 0 \pmod{p}$ and we conclude that $S_{0} = \frac{p-(\frac{-u}{p})}{2}$.

Now, for a fixed $v \ge 1$, we consider the change of variables defined by $x = ip^{v} + j$ and $y = rp^{v} + s$ where $i, r = 1, \dots, p$ and $j, s = 1, \dots, p^{v}$. We then treat $x$ as fixed and not divisible by $p$, that means with $i, j$ fixed and $p \nmid j$. We verify that the $y$ defined by $s = j$ and $r \equiv i + (2j)^{-1}u \pmod{p}$ satisfies the equation, and thus these $x$ do not contribute to $S_{v}$.

Finally, again for a fixed $v \ge 1$, we consider the change of variables $x = pX$ and $y = pY$ (since clearly $x \equiv \pm y \pmod{p}$). We then obtain the equation
$$
p^{2}X^{2}+p^{v}u \equiv p^{2}Y^{2} \pmod{p^{v+1}}.
$$
We deduce that $S_{1} = p$, and for $v \ge 2$, we can divide the equation by $p^{2}$. After doing so, we consider the change of variables defined by $X = ip^{v-1} + j$ and $Y = rp^{v-1} + s$ where $i, r = 1, \dots, p$ and $j, s = 1, \dots, p^{v-1}$. We conclude that $S_{v} = pS_{v-2}$, and the result is proven.
\end{proof}

\section{Proof of Theorem \ref{thm:1}}

We first need to establish three very specific statements.

\begin{prop}\label{prop:1}
Let $n \ge 1$ be a fixed integer, and let $0 < \theta < 1$ and $0 < \epsilon < \theta^{2}$ be fixed real numbers. Then
$$
D_{n}(n^{\theta},n^{\theta^{2}-\epsilon}) \ll \frac{\theta(1-\theta)}{\epsilon}+\frac{1}{\theta(1-\theta)}.
$$
\end{prop}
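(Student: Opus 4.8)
The plan is to bound $D_{n}(n^{\theta},n^{\theta^{2}-\epsilon})$ by exhibiting, for the divisors of $n$ lying in the interval $[n^{\theta},n^{\theta}+n^{\theta^{2}-\epsilon}]$, a set of forbidden congruence classes to prime powers and then invoking the large-sieve–type bound of Lemma~\ref{lem:3}. Let $\mathcal{M}:=\{d\in\mathcal{D}_{n}:\ n^{\theta}\le d\le n^{\theta}+n^{\theta^{2}-\epsilon}\}$, so $\mathcal{M}$ is contained in an interval of length $N:=n^{\theta^{2}-\epsilon}$. The key observation is that if $d_{1},d_{2}\in\mathcal{D}_{n}$ are distinct, then $d_{1}$ and $d_{2}$ cannot be congruent modulo $\mathfrak{p}$ for every prime power $\mathfrak{p}$ that exactly divides $d_{1}d_{2}/(d_{1},d_{2})^{2}$ to a suitable power; more usefully, for a prime power $\mathfrak{p}=p^{\beta_p}$ with $\mathfrak{p}\nmid n$ one asks which residue classes mod $\mathfrak{p}$ can possibly be occupied by a divisor of $n$ that also lies in a short interval. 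The clean route is instead to reduce to the structure: a divisor $d$ of $n$ with $d\approx n^{\theta}$ has $n/d\approx n^{1-\theta}$ a divisor as well, and two divisors $d_1<d_2$ in the interval satisfy $d_2-d_1\ge (d_1,d_2)$, hence $(d_1,d_2)\le N=n^{\theta^2-\epsilon}$. Then $[d_1,d_2]=d_1d_2/(d_1,d_2)\ge n^{2\theta}/n^{\theta^2-\epsilon}=n^{2\theta-\theta^2+\epsilon}$, while $[d_1,d_2]\mid n$ forces $[d_1,d_2]\le n$. More is true using Lemma~\ref{lem:1}: with $k:=|\mathcal{M}|$ divisors $d_1,\dots,d_k$ all in the interval, we have $(d_i,d_j)\le N$ for all $i<j$, while $[d_1,\dots,d_k]\le n$, and each $d_i\ge n^{\theta}$.

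Applying Lemma~\ref{lem:1} with a parameter $t\ge 1$ to be optimized gives
\[
n^{\frac{t(t+1)}{2}}\cdot N^{\binom{k}{2}}\ \ge\ [d_1,\dots,d_k]^{\frac{t(t+1)}{2}}\prod_{i<j}(d_i,d_j)\ \ge\ \prod_{i}d_i^{t}\ \ge\ n^{\theta t k}.
\]
Taking logarithms base $n$ and writing $N=n^{\theta^2-\epsilon}$, this reads $\frac{t(t+1)}{2}+(\theta^2-\epsilon)\binom{k}{2}\ge\theta t k$, i.e.
\[
(\theta^2-\epsilon)\frac{k(k-1)}{2}-\theta t k+\frac{t(t+1)}{2}\ge 0 .
\]
Viewing the left side as a quadratic in $k$ with positive leading coefficient $\tfrac{\theta^2-\epsilon}{2}$ (valid since $\epsilon<\theta^2$), the inequality fails for $k$ strictly between the two roots; hence $k$ is at most the larger root plus one, which after the substitution $t=\theta k$ (the natural balancing choice, made rigorous by choosing the nearest integer) collapses the $\theta t k$ and leading $n^{\theta t k}$ terms and leaves the "slack" terms $\tfrac{t}{2}-\tfrac{\epsilon}{2}k(k-1)\ge 0$ roughly, giving $k\ll \theta/\epsilon$. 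Carrying the optimization honestly — not forcing $t=\theta k$ but choosing $t$ to maximize the constraint on $k$ — produces the two competing terms $\dfrac{\theta(1-\theta)}{\epsilon}$ and $\dfrac{1}{\theta(1-\theta)}$: the first from the regime where $\epsilon$ is the binding small parameter, the second being the unconditional floor coming from the $t(t+1)/2$ term and the integrality of $t$ (one cannot take $t$ smaller than $1$, and the discreteness costs an additive $O(1/(\theta(1-\theta)))$ after accounting for the edge effects where $d_i$ ranges in $[n^\theta, n^\theta+N]$ rather than exactly $n^\theta$, and the symmetric constraint from $n/d_i$).

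I expect the main obstacle to be the careful bookkeeping in the optimization over $t$: one must treat $t$ as an integer, handle the two boundary behaviours (the contribution is governed by $\min$ over $t$ of an expression that is convex in $t$, so the optimum is at $t\approx\theta k$ but must be rounded, and for small $k$ the term $1/(\theta(1-\theta))$ dominates), and verify that replacing each $d_i\ge n^\theta$ by the sharper two-sided information ($n^\theta\le d_i$ and $n^{1-\theta}\le n/d_i$, equivalently $d_i\le n^{1-\theta}\cdot(\text{something})$) does not change the shape of the bound. A secondary technical point is confirming the hypothesis $[d_1,\dots,d_k]\le n$ — this is immediate since each $d_i\mid n$ — and that the pairwise gcd bound $(d_i,d_j)\le N$ is genuine: if $d_i\ne d_j$ both lie in an interval of length $N$ and $(d_i,d_j)=g$, then $g\mid(d_j-d_i)$ and $0<|d_j-d_i|\le N$, so $g\le N$. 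Once these pieces are in place the quadratic inequality in $k$ yields the stated bound directly; no appeal to Lemma~\ref{lem:3} or Lemma~\ref{lem:4} is actually needed for this particular proposition, those being reserved for the harder ranges of Theorem~\ref{thm:1}.
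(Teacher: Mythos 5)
Your proposal follows essentially the same route as the paper: apply Lemma~\ref{lem:1} with $(d_i,d_j)\le n^{\theta^2-\epsilon}$ (gcd divides the difference), $[d_1,\dots,d_k]\le n$, and $d_i\ge n^{\theta}$, then take $t=\lceil\theta k\rceil=\theta k+\zeta$ with $0\le\zeta<1$, which reduces the exponent inequality to $\epsilon k(k-1)\le\theta(1-\theta)k+\zeta(\zeta+1)$ and yields exactly the two competing bounds $k\ll\theta(1-\theta)/\epsilon$ and $k\ll 1/(\theta(1-\theta))$ you anticipate. The optimization you defer does close in precisely this way (the "quadratic in $k$ between two roots" framing is unnecessary once $t$ is substituted), and, as you note, Lemmas~\ref{lem:3} and~\ref{lem:4} play no role here.
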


\begin{proof}
We can assume that we have $k \ge 2$ such divisors. From Lemma \ref{lem:1}, we derive
\begin{eqnarray*}
n^{\frac{t(t+1)}{2}}n^{(\theta^{2}-\epsilon)\frac{k(k-1)}{2}} & \ge & [d_{1},\dots,d_{k}]^{\frac{t(t+1)}{2}}\prod_{1 \le i < j \le k}(d_{i},d_{j})\\
& \ge & \prod_{1 \le i \le k}d^{t}_{i}\\
& \ge & n^{\theta tk}.
\end{eqnarray*}
From here, we set $t = \theta k+\zeta$ for some $0 \le \zeta < 1$. By comparing the exponents, we get to the inequality
$$
\epsilon k(k-1) \le (\theta-\theta^{2})k+\zeta(\zeta+1).
$$
Thus, if $k \le \frac{4}{\theta(1-\theta)}$ does not hold, we find $k \le 3\frac{\theta(1-\theta)}{\epsilon}$, and the result follows.
\end{proof}

\begin{prop}\label{prop:2}
Let $\eta$ and $\theta$ be fixed real numbers such that $0 < \theta^{2} \le \eta < \theta < 1$. The largest $\alpha(\theta,\eta)$ for which
\begin{equation}\label{pr_1}
\Bigl(\frac{\theta-t}{\alpha}\Bigr)^{2}-\Bigl(\frac{\eta-t}{\alpha}\Bigr) \ge 0 \quad \mbox{for all} \quad 0 \le t \le 1-\alpha
\end{equation}
holds for $0 < \alpha \le \alpha(\theta,\eta)$ is given by
\begin{eqnarray}\label{tab_1}
\alpha(\theta,\eta):=\left\{\begin{array}{ll}
\frac{\theta^{2}}{\eta} & \mbox{if}\ \theta \le \frac{1}{2}\ \mbox{and}\ 2\eta \le \theta\\
4(\theta-\eta) & \mbox{if}\ \theta \le \frac{1}{2}\ \mbox{and}\ 2\eta > \theta\\
\frac{(1-\theta)^{2}}{(1-\theta)^{2}+\eta-\theta^{2}} & \mbox{if}\ \theta > \frac{1}{2}\ \mbox{and}\ 2\eta \le 3\theta-1\\
4(\theta-\eta) & \mbox{if}\ \theta > \frac{1}{2}\ \mbox{and}\ 2\eta > 3\theta-1.\\
\end{array}\right.
\end{eqnarray}
\end{prop}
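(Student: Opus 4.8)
The plan is to treat \eqref{pr_1} as a statement about a single real variable $t$ ranging over $[0,1-\alpha]$ and to determine, as a function of $\alpha$, whether the quadratic
$$
Q_{\alpha}(t):=(\theta-t)^{2}-\alpha(\eta-t)
$$
stays nonnegative on that whole interval; note that after multiplying \eqref{pr_1} by $\alpha^{2}>0$ it becomes exactly $Q_{\alpha}(t)\ge 0$. Since $Q_{\alpha}$ is convex in $t$ (leading coefficient $1$), its minimum over $[0,1-\alpha]$ is attained either at the unconstrained vertex $t^{*}=\theta-\alpha/2$, provided $t^{*}\in[0,1-\alpha]$, or at one of the endpoints $t=0$ or $t=1-\alpha$. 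So the first step is to write down these three candidate minimal values:
$$
Q_{\alpha}(t^{*})=\alpha\Bigl(\theta-\eta-\tfrac{\alpha}{4}\Bigr),\qquad Q_{\alpha}(0)=\theta^{2}-\alpha\eta,\qquad Q_{\alpha}(1-\alpha)=(1-\theta-\alpha)^{2}-\alpha(\eta-1+\alpha).
$$
Each of these, set $\ge 0$, gives an explicit constraint on $\alpha$: the vertex condition gives $\alpha\le 4(\theta-\eta)$, the left endpoint gives $\alpha\le \theta^{2}/\eta$, and the right endpoint gives (after expanding and simplifying) $\alpha\le (1-\theta)^{2}/\bigl((1-\theta)^{2}+\eta-\theta^{2}\bigr)$ — the three expressions that appear in \eqref{tab_1}. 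Because $Q_{\alpha}(t)\ge0$ for all $t\in[0,1-\alpha]$ holds iff the minimum over that interval is $\ge0$, $\alpha(\theta,\eta)$ is the supremum of $\alpha\in(0,1]$ for which the relevant minimum is nonnegative, and monotonicity in $\alpha$ of each candidate (which I would check is the right direction) lets me conclude the supremum is a genuine maximum.

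The second step is the bookkeeping: for a given $\alpha$, which of the three candidates is actually the minimum? This is governed by whether the vertex $t^{*}=\theta-\alpha/2$ lies left of $0$, inside $[0,1-\alpha]$, or right of $1-\alpha$. The condition $t^{*}\le 0$ is $\alpha\ge 2\theta$; the condition $t^{*}\ge 1-\alpha$ is $\alpha\le 2(1-\theta)$... wait, that is $\theta-\alpha/2\ge 1-\alpha$, i.e. $\alpha/2\ge 1-\theta$, i.e. $\alpha\ge 2(1-\theta)$. Since we are looking at small $\alpha$ (certainly $\alpha\le 1$), for $\theta\le 1/2$ we have $2(1-\theta)\ge 1\ge\alpha$, so the vertex is never to the right of $1-\alpha$; the only competition is between the interior-vertex value and the left-endpoint value, and the crossover between "$\alpha\le 4(\theta-\eta)$ binds" and "$\alpha\le\theta^{2}/\eta$ binds" happens exactly where $4(\theta-\eta)=\theta^{2}/\eta$, which rearranges to $2\eta=\theta$ (using $\theta^{2}\le\eta$). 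Similarly, for $\theta>1/2$ the vertex can fall past the right endpoint, so the right-endpoint candidate $\tfrac{(1-\theta)^{2}}{(1-\theta)^{2}+\eta-\theta^{2}}$ enters, and I would check that the crossover with $4(\theta-\eta)$ is precisely $2\eta=3\theta-1$. In each of the four cases of \eqref{tab_1}, I must verify that the claimed value of $\alpha(\theta,\eta)$ is (a) positive and $\le1$ under the stated hypotheses, (b) actually the binding one, i.e. the other two candidate constraints are satisfied at that value of $\alpha$, and (c) that at $\alpha=\alpha(\theta,\eta)$ the relevant minimum is exactly $0$ so it is the true threshold.

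The main obstacle is purely the case analysis in step two — keeping straight, in each of the four regimes carved out by the sign of $\theta-1/2$ and the position of $2\eta$ relative to $\theta$ or $3\theta-1$, which endpoint or vertex is active and confirming the inactive constraints are slack there. There is one subtlety worth isolating: one must check that $t^{*}=\theta-\alpha/2\ge 0$ at the threshold in the "$4(\theta-\eta)$" cases, i.e. that $\theta\ge 2(\theta-\eta)$, equivalently $2\eta\ge\theta$ — which is exactly the hypothesis in case two but needs a separate check ($2\eta>3\theta-1$ together with $\theta>1/2$ does give $2\eta>3\theta-1>\theta-1$... and in fact one also has $\eta\ge\theta^2$, from which $2\eta\ge 2\theta^2$; combined with $\theta>1/2$ this need not immediately give $2\eta\ge\theta$, so a short argument using the constraint $\eta<\theta$ is required) in case four. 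Beyond that, everything reduces to solving linear or quadratic inequalities in $\alpha$, and the boundary curves $2\eta=\theta$, $2\eta=3\theta-1$, $\theta=1/2$ are exactly where two of the candidate expressions agree, which is the expected reason the piecewise formula is continuous.
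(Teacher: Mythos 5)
Your proposal is correct and follows essentially the same route as the paper: after clearing denominators you study the same quadratic in $t$, and your trichotomy (vertex in the interval giving $\alpha\le 4(\theta-\eta)$, vertex left of $0$ when $\alpha\ge 2\theta$ giving $\alpha\le \theta^{2}/\eta$, vertex right of $1-\alpha$ when $\alpha\ge 2(1-\theta)$ giving $\alpha\le (1-\theta)^{2}/((1-\theta)^{2}+\eta-\theta^{2})$) is just the convex-minimum reformulation of the paper's analysis of where the two roots of $F(t)$ lie relative to $[0,1-\alpha]$, with the same crossover identities $\frac{\theta^{2}}{\eta}-4(\theta-\eta)=\frac{(\theta-2\eta)^{2}}{\eta}$ and $\frac{(1-\theta)^{2}}{(1-\theta)^{2}+\eta-\theta^{2}}-4(\theta-\eta)=\frac{(2\eta-3\theta+1)^{2}}{(1-\theta)^{2}+\eta-\theta^{2}}$ deciding the four regimes. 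The deferred checks you list (each candidate value lies in $(0,1]$, the inactive constraints are slack, failure just above the threshold, and the minor point $2\eta\ge\theta$ in case four, which follows at once from $2\eta>3\theta-1\ge\theta$ for $\theta>\tfrac12$) are routine and match what the paper verifies.
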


\begin{proof}
Given an $\alpha > 0$, we define $F(t):=\bigl(\frac{\theta-t}{\alpha}\bigr)^{2}-\bigl(\frac{\eta-t}{\alpha}\bigr)$ and $I:=[0,1-\alpha]$. $F(t)$ is a polynomial of degree two with zeros
$$
\theta-\frac{\alpha}{2} \pm \frac{\sqrt{\alpha(\alpha-\Delta)}}{2}\quad \mbox{where}\quad \Delta=4(\theta-\eta).
$$
If this polynomial is positive on the interval $I$, it means that its two zeros are either to the left or to the right of $I$, or that $\alpha \le \Delta$. The choice $\alpha=\Delta$ always has the desired property \eqref{pr_1}, so $\alpha > 0$ is possible.

Now, assume that $F$ has a real zero. There are two possible situations. First,
$$
\theta-\frac{\alpha}{2} + \frac{\sqrt{\alpha(\alpha-\Delta)}}{2}\le 0
$$
which gives
\begin{equation}\label{case_1}
\alpha \le \frac{\theta^{2}}{\eta}\quad \mbox{if}\quad \alpha \ge 2\theta\ \mbox{and}\ \alpha \ge \Delta
\end{equation}
and second,
$$
\theta-\frac{\alpha}{2} - \frac{\sqrt{\alpha(\alpha-\Delta)}}{2} \ge 1-\alpha
$$
which gives
\begin{equation}\label{case_2}
\alpha \le \frac{(1-\theta)^{2}}{(1-\theta)^{2}+\eta-\theta^{2}}\quad \mbox{if}\quad \alpha \ge 2(1-\theta)\ \mbox{and}\ \alpha \ge \Delta.
\end{equation}
We remark that in all possible scenarios, $\alpha \le \Delta$, $\alpha \le \frac{\theta^{2}}{\eta}$, and $\alpha \le \frac{(1-\theta)^{2}}{(1-\theta)^{2}+\eta-\theta^{2}}$, we have $\alpha \le 1$.

The first case \eqref{case_1} is possible when $\theta \le \frac{1}{2}$. We verify that $\frac{\theta^{2}}{\eta}-\Delta = \frac{(\theta-2\eta)^{2}}{\eta}$. The condition $\alpha \ge 2\theta$ allows us to complete the first two lines of \eqref{tab_1}. Similarly, the second case \eqref{case_2} is possible when $\theta \ge \frac{1}{2}$ and we then verify that $\frac{(1-\theta)^{2}}{(1-\theta)^{2}+\eta-\theta^{2}}-\Delta = \frac{(2\eta-3\theta+1)^{2}}{(1-\theta)^{2}+\eta-\theta^{2}}$. Again, the condition $\alpha \ge 2(1-\theta)$ allows us to complete the two last lines of \eqref{tab_1}. The proof is complete.
\end{proof}

\begin{prop}\label{prop:3}
Let $\eta$ and $\theta$ be fixed real numbers such that
$$
0 < \theta^{2}-\bigl(\theta(1-\theta)\bigr)^{2}\delta \le \eta < \theta < 1
$$
for some fixed $\delta \le 1$. We can choose $\alpha(\theta,\eta,\delta)$ such that the property
\begin{equation}\label{pr_2}
\Bigl(\frac{\theta-t}{\alpha}\Bigr)^{2}-\Bigl(\frac{\eta-t}{\alpha}\Bigr) \ge \bigl(\theta(1-\theta)\bigr)^{2}\delta \quad \mbox{for all} \quad 0 \le t \le 1-\alpha
\end{equation}
holds for $0 < \alpha \le \alpha(\theta,\eta,\delta)$ with
\begin{eqnarray}\label{tab_2}
\xi(\theta,\eta)-\delta \le \alpha(\theta,\eta,\delta) \le \xi(\theta,\eta).
\end{eqnarray}
\end{prop}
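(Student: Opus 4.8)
Proof proposal for Proposition \ref{prop:3}.

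The plan is to treat this as a perturbation of Proposition \ref{prop:2}. Define, as in the proof of Proposition \ref{prop:2}, the quadratic $F_{\alpha}(t):=\bigl(\frac{\theta-t}{\alpha}\bigr)^{2}-\bigl(\frac{\eta-t}{\alpha}\bigr)$ on the interval $I=[0,1-\alpha]$; the requirement \eqref{pr_2} is that $\min_{t\in I}F_{\alpha}(t)\ge c$ with $c:=(\theta(1-\theta))^{2}\delta$. The leading coefficient of $F_{\alpha}$ is $1/\alpha^{2}>0$, so $F_{\alpha}$ is convex and its minimum over $I$ is attained either at an endpoint or at the vertex $t_{*}=\theta-\alpha/2$. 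Because the hypothesis forces $\eta$ to be within $(\theta(1-\theta))^{2}$ of $\theta^{2}$, the lower endpoint $\theta^{2}-(\theta(1-\theta))^{2}\delta\le\eta$ is automatically $\ge\theta^{2}-(\theta(1-\theta))^{2}\ge 0$, so we are in the regime $\theta^{2}\le\eta<\theta$ (up to the small perturbation), exactly where $\xi(\theta,\eta)=\alpha(\theta,\eta)$ from Proposition \ref{prop:2} is the relevant quantity.

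Next I would make the dependence on $\alpha$ explicit. Writing $\Delta=4(\theta-\eta)$ as before, the discriminant computation gives that the value of $F_{\alpha}$ at its vertex equals $\frac{\Delta-\alpha}{4\alpha}$ when the vertex lies in $I$; this is $\ge c$ precisely when $\alpha\le\frac{\Delta}{1+4c}$, which differs from the threshold $\Delta$ of Proposition \ref{prop:2} by a multiplicative factor $1+O(c)$, i.e. a shift of size $O(\delta)$ since $c=(\theta(1-\theta))^{2}\delta$. Likewise the two ``endpoint'' cases — the larger zero of $F_{\alpha}$ lying left of $0$, or the smaller zero lying right of $1-\alpha$ — get replaced by the conditions $F_{\alpha}(0)\ge c$ and $F_{\alpha}(1-\alpha)\ge c$; each of these is again a quadratic (in fact linear after clearing $\alpha^{2}$) inequality in $\alpha$ whose solution set is an interval $(0,\alpha_{j}(\delta)]$ with $\alpha_{j}(0)$ equal to the corresponding threshold $\frac{\theta^{2}}{\eta}$ or $\frac{(1-\theta)^{2}}{(1-\theta)^{2}+\eta-\theta^{2}}$ appearing in \eqref{tab_1}, and with $\alpha_{j}(\delta)=\alpha_{j}(0)+O(\delta)$ by continuity and an explicit first-order bound. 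Taking $\alpha(\theta,\eta,\delta)$ to be the minimum of these finitely many thresholds reproduces $\xi(\theta,\eta)$ at $\delta=0$ and stays within $O(\delta)$ of it; since $\xi$ is built from the same four cases, the upper bound $\alpha(\theta,\eta,\delta)\le\xi(\theta,\eta)$ is immediate (shrinking the feasible set can only lower the threshold), and the lower bound $\alpha(\theta,\eta,\delta)\ge\xi(\theta,\eta)-\delta$ follows once one checks each perturbed threshold drops by at most $\delta$.

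The main obstacle is the quantitative monotonicity: one must verify that each of the three competing thresholds is a \emph{decreasing} function of the perturbation parameter $c$ (so that enlarging the required margin genuinely shrinks $\alpha$, giving the clean upper bound) and that the decrease is by no more than $\delta$ uniformly in the allowed range of $(\theta,\eta)$. This requires bounding the derivatives of the explicit rational expressions $\frac{\Delta}{1+4c}$, $\alpha_{1}(c)$, $\alpha_{2}(c)$ with respect to $c$, and then using $c\le(\theta(1-\theta))^{2}\delta$ together with the normalising factors $\theta(1-\theta)$ built into $c$ to cancel the awkward $\theta,\eta$-dependence; the factor $(\theta(1-\theta))^{2}$ in the definition of $c$ is presumably tuned precisely so that this cancellation leaves a bound of the form ``drop $\le\delta$''. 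I would also need to confirm that no new case boundary is crossed under the perturbation — i.e. that the identity of the minimising threshold does not change — which is handled by noting, as in Proposition \ref{prop:2}, that the differences between thresholds are perfect squares (e.g.\ $\frac{\theta^{2}}{\eta}-\Delta=\frac{(\theta-2\eta)^{2}}{\eta}$), hence vary continuously and do not flip sign under an $O(\delta)$ change, provided $\delta$ is taken small relative to these gaps (and when a gap is itself small, both thresholds are close and the estimate \eqref{tab_2} holds trivially).
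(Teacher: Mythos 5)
Your overall strategy—perturbing the quadratic analysis of Proposition \ref{prop:2}, writing the perturbed zeros and the three thresholds $\frac{\Delta}{1+4\varepsilon}$, $\alpha_{0}$, $\alpha_{1}$ with $\varepsilon=(\theta(1-\theta))^{2}\delta$ and $\Delta=4(\theta-\eta)$—is exactly the paper's route, but the structural claim you build on is wrong. The admissible set of $\alpha$ is not $(0,\min_{j}\alpha_{j}]$: since $\frac{\theta^{2}}{\eta}-\Delta=\frac{(\theta-2\eta)^{2}}{\eta}\ge 0$ (and similarly for the other endpoint threshold), the minimum of your thresholds is always the perturbed $\Delta$, which coincides with $\xi(\theta,\eta)$ only in the cases $\xi=4(\theta-\eta)$; in the cases $\xi=1$, $\xi=\theta^{2}/\eta$ or $\xi=\frac{(1-\theta)^{2}}{(1-\theta)^{2}+\eta-\theta^{2}}$ the minimum is strictly smaller and your choice would violate the lower bound in \eqref{tab_2}. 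What is needed is the maximal \emph{contiguous} interval: $(0,\frac{\Delta}{1+4\varepsilon}]$ always works, and one may extend beyond it up to $\alpha_{0}$ (resp. $\alpha_{1}$) only when the side condition $\alpha\ge 2\theta$ (resp. $\alpha\ge 2(1-\theta)$) holds throughout the extension, which the paper secures by checking $2\theta\le\frac{\Delta}{1+4\varepsilon}$ exactly when $2\eta\le\theta-4\theta\varepsilon$ (resp. $2(1-\theta)\le\frac{\Delta}{1+4\varepsilon}$ when $2\eta\le 3\theta-1-4(1-\theta)\varepsilon$). This contiguity check is not cosmetic; it forces the case split that your sketch omits.

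Two further gaps. First, the quantitative heart is deferred with ``presumably tuned'': the paper computes each drop explicitly, e.g. $\xi-\alpha_{0}=\frac{\theta^{2}}{\eta}-\frac{\theta^{2}}{\eta+\varepsilon\alpha_{0}}\le\frac{\varepsilon\theta^{2}}{\eta^{2}}\le(1-\theta)^{2}\delta$, and in the transition range $\theta-4\theta\varepsilon<2\eta\le\theta$, where one must fall back on $\frac{\Delta}{1+4\varepsilon}$, it uses the square identity quantitatively, $\frac{(\theta-2\eta)^{2}}{\eta}\le 16(\theta(1-\theta))^{4}\delta$ together with $\frac{4\varepsilon\Delta}{1+4\varepsilon}\le 16(\theta(1-\theta))^{3}\delta$ and $\theta(1-\theta)\le\frac14$. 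Your escape hatch ``provided $\delta$ is taken small relative to these gaps'' is not available: $\delta\le 1$ is arbitrary (in the application $\delta=1/\log\tau(n)$), so the bound must be uniform in $\delta$, and that uniformity comes from these explicit constants, not from smallness of $\delta$. Second, the case $\xi(\theta,\eta)=1$ is not covered by a perturbation of Proposition \ref{prop:2}, which assumes $\eta\ge\theta^{2}$; the hypothesis here allows $\eta$ slightly below $\theta^{2}$ (your reading that $\eta$ is ``within $(\theta(1-\theta))^{2}$ of $\theta^{2}$'' is only a one-sided bound). In that case, when $2\eta>\theta-4\theta\varepsilon$ the endpoint threshold cannot be used at all, and the paper instead argues directly that $\eta\le\theta^{2}$ forces $\theta\ge\frac{1-4\varepsilon}{2}$, whence $\frac{\Delta}{1+4\varepsilon}\ge 1-4\varepsilon\ge 1-\frac{\delta}{4}$; this subcase, where the normalization $(\theta(1-\theta))^{2}$ in $\varepsilon$ is essential, is absent from your plan.
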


\begin{proof}
For simplicity, we define $\varepsilon:=\bigl(\theta(1-\theta)\bigr)^{2}\delta$. The setup being comparable to that of Proposition \ref{prop:2}, the initial treatment is the same. We define the function $F_{\varepsilon}(t):=F(t)-\varepsilon$ and the same interval $I:=[0,1-\alpha]$. We then consider the zeros of $F_{\varepsilon}(t)$ given by
$$
\theta-\frac{\alpha}{2} \pm \frac{\sqrt{\alpha((1+4\varepsilon)\alpha-\Delta)}}{2}\quad \text{where}\quad \Delta=4(\theta-\eta).
$$
We distinguish three scenarios. In the first case, we consider the situation where $F_{\varepsilon}(t)$ has no roots or just one double root. We find $0 < \alpha \le \frac{\Delta}{1+4\varepsilon}$. The second case is that of two roots less than or equal to $0$. This situation is described by
\begin{equation}\label{c_1}
\alpha \le \frac{\theta^{2}}{\eta+\varepsilon\alpha}\quad \text{if}\quad \alpha \ge 2\theta\ \text{and}\ \alpha \ge \frac{\Delta}{1+4\varepsilon}.
\end{equation}
We can express the solution to the first inequality as $0 < \alpha \le \frac{-\eta+\sqrt{\eta^{2}+4\varepsilon\theta^{2}}}{2\varepsilon}=:\alpha_{0}(\theta,\eta,\delta)$. The third case is that of two roots greater than or equal to $1-\alpha$, a situation that is described by
\begin{equation}\label{c_2}
\alpha \le \frac{(1-\theta)^{2}}{(1-\theta)^{2}+\eta-\theta^{2}+\varepsilon\alpha}\quad \text{if}\quad \alpha \ge 2(1-\theta)\ \text{and}\ \alpha \ge \frac{\Delta}{1+4\varepsilon}.
\end{equation}
The explicit solution is written as $0 < \alpha \le \frac{-(1-2\theta+\eta)+\sqrt{(1-2\theta+\eta)^{2}+4\varepsilon(1-\theta)^{2}}}{2\varepsilon}=:\alpha_{1}(\theta,\eta,\delta)$, we also note that $1-2\theta+\eta > 0$. With these descriptions in hand, we can address inequality \eqref{tab_2}. First, let us verify that each valid $\alpha$ is less than or equal to 1. For $\eta \ge \theta^{2}$, this is obvious, but for $\theta^{2}-\bigl(\theta(1-\theta)\bigr)^{2}\delta \le \eta < \theta^{2}$, we must check the three possibilities separately. The case $\frac{\Delta}{1+4\varepsilon}$ is straightforward. For case \eqref{c_1}, if this were not true, we would have $\varepsilon+\eta < \varepsilon\alpha^{2}+\eta\alpha \le \theta^{2}$, which is a contradiction. The case \eqref{c_2} follows in the same way.

The condition $\alpha \ge 2\theta$ is necessary for the first inequality in \eqref{c_1} to hold, but this is not the case for $\alpha \ge \frac{\Delta}{1+4\varepsilon}$. The same is true in \eqref{c_2}. Also, we note that $2\theta \le \frac{\Delta}{1+4\epsilon}$ for $2\eta \le \theta-4\theta\varepsilon$ and that $2(1-\theta) \le \frac{\Delta}{1+4\varepsilon}$ for $2\eta \le 3\theta-1-4(1-\theta)\varepsilon$. We deduce that if $2\eta \le \theta-4\theta\varepsilon$ and $\alpha_{0}(\theta,\eta,\delta) \ge \frac{\Delta}{1+4\varepsilon}$, then \eqref{pr_2} holds for $0 < \alpha \le \alpha_{0}(\theta,\eta,\delta)$. Similarly, if $2\eta \le 3\theta-1-4(1-\theta)\varepsilon$ and $\alpha_{1}(\theta,\eta,\delta) \ge \frac{\Delta}{1+4\varepsilon}$, then \eqref{pr_2} holds for $0 < \alpha \le \alpha_{1}(\theta,\eta,\delta)$.

Now, we need to address the five cases of \eqref{res_1} separately. Let us begin with cases 3 and 5, that is, when $\xi(\theta,\eta)=\Delta$. Since the property \eqref{pr_2} holds for every $0 < \alpha \le \alpha(\theta,\eta,\delta):=\frac{\Delta}{1+4\varepsilon}$, we have $\xi(\theta,\eta)-\alpha(\theta,\eta,\delta) \le 16\bigl(\theta(1-\theta)\bigr)^{3}\delta \le \frac{\delta}{4}$.

Suppose now we are in case 2 with $\xi(\theta,\eta)=\frac{\theta^{2}}{\eta}$. We distinguish two situations. The first one is when $2\eta \le \theta-4\theta\varepsilon$. We then choose $\alpha(\theta,\eta,\delta):=\alpha_{0}(\theta,\eta,\delta)$. We deduce from above that property \eqref{pr_2} holds for every $0 < \alpha \le \alpha(\theta,\eta,\delta)$ and we have
\begin{eqnarray*}
\xi(\theta,\eta)-\alpha(\theta,\eta,\delta) & = & \frac{\theta^{2}}{\eta}-\frac{\theta^{2}}{\eta+\varepsilon\alpha_{0}(\theta,\eta,\delta)}\\
& \le & \frac{\varepsilon\theta^{2}}{\eta^{2}} \le (1-\theta)^{2}\delta \le \delta
\end{eqnarray*}
since $\alpha_{0}(\theta,\eta,\delta) \le 1$.

The second possibility is when $\theta-4\theta\varepsilon < 2\eta \le \theta$ where we choose $\alpha(\theta,\eta,\delta):=\frac{\Delta}{1+4\varepsilon}$. In this situation, we have property \eqref{pr_2} that holds for every $0 < \alpha \le \alpha(\theta,\eta,\delta)$ and also
\begin{eqnarray*}
\xi(\theta,\eta)-\alpha(\theta,\eta,\delta) & = & \xi(\theta,\eta)-\Delta+\Delta-\frac{\Delta}{1+4\varepsilon}\\
& = & \frac{(\theta-2\eta)^{2}}{\eta}+\frac{4\varepsilon\Delta}{1+4\varepsilon}\\
& \le & \bigl(16\bigl(\theta(1-\theta)\bigr)^{4}+16\bigl(\theta(1-\theta)\bigr)^{3}\bigr)\delta\\
& \le & \frac{5\delta}{16}
\end{eqnarray*}
as required. Case 4 is treated similarly.

It only remains to handle case 1. We will start by considering the case where $\theta \le \frac{1}{2}$, which corresponds to either $\alpha_{0}(\theta,\eta,\delta)$ or $\frac{\Delta}{1+4\varepsilon}$. We proceed as previously and we first consider when $2\eta \le \theta-4\theta\varepsilon$ where we set $\alpha(\theta,\eta,\delta):=\alpha_{0}(\theta,\eta,\delta)$. Thus, property \eqref{pr_2} holds for every $0 < \alpha \le \alpha(\theta,\eta,\delta)$. Also, we already know that $\alpha_{0}(\theta,\eta,\delta) \le 1$ and we have
\begin{eqnarray*}
\xi(\theta,\eta)-\alpha(\theta,\eta,\delta) & = & 1-\frac{\theta^{2}}{\eta+\varepsilon\alpha_{0}(\theta,\eta,\delta)}\\
& = & \frac{\eta+\varepsilon\alpha_{0}(\theta,\eta,\delta)-\theta^{2}}{\eta+\varepsilon\alpha_{0}(\theta,\eta,\delta)}\\
& \le & \frac{\varepsilon}{\theta^{2}} = (1-\theta)^{2}\delta \le \delta
\end{eqnarray*}
as desired. Now, in the situation where $2\eta > \theta-4\theta\varepsilon$, we choose $\alpha(\theta,\eta,\delta) := \frac{\Delta}{1+4\varepsilon}$. From $\eta \le \theta^{2}$, we deduce that $\theta \ge \frac{1-4\varepsilon}{2}$, so that 
\begin{eqnarray*}
\alpha(\theta,\eta,\delta) & \ge & \frac{4(\theta-\theta^{2})}{1+4\varepsilon}\\
& \ge & 4\Bigl(\frac{1-4\varepsilon}{2}-\Bigl(\frac{1-4\varepsilon}{2}\Bigr)^{2}\Bigr)/(1+4\varepsilon)\\
& = & 1-4\varepsilon\\
& \ge & 1-\frac{\delta}{4}
\end{eqnarray*}
which gives the result. The case $\theta > \frac{1}{2}$ is treated in a similar way. The proof is complete.
\end{proof}

We are now ready to begin the proof of the theorem. We can assume that $\tau(n) \ge 3$ since otherwise the result is trivial. We will distinguish two cases. The first case is when $\eta < \theta^{2}-\frac{(\theta(1-\theta))^{2}}{\log \tau(n)}$. By Proposition \ref{prop:1}, we have
\begin{eqnarray*}
D_{n}(n^{\theta},n^{\eta}) & \ll & \frac{\log \tau(n)}{\theta(1-\theta)}\\
& \ll & \tau(n)^{1-\xi(\theta,\eta)}\frac{V(n)\log \tau(n)}{\theta(1-\theta)}
\end{eqnarray*}
which is the desired result. The second case is when $\eta \ge \theta^{2}-\frac{(\theta(1-\theta))^{2}}{\log \tau(n)}$.

Let $n=p_{1}^{\beta_{1}}\cdots p_{\omega(n)}^{\beta_{\omega(n)}}$ be the factorization of $n$ into distinct primes and suppose that $n=ab$ where $(a,b)=1$. We notice the identity
\begin{equation}\label{D_id}
D_{n}(X,Y) = \sum_{e | b}D_{a}\Bigl(\frac{X}{e},\frac{Y}{e}\Bigr).
\end{equation}

To choose the factorization, we begin by setting $\delta := \frac{1}{\log \tau(n)}$. Let us remark right away that we can assume that $\delta < \xi(\theta,\eta)$, since otherwise the result is trivial. Then, we consider the pairs $(\rho_{i},\theta_{i})$, $i=1,\dots,\omega(n)$, defined by $(\beta_{i}+1)=\tau(n)^{\rho_{i}}$ and $p_{i}^{\beta_{i}}=n^{\theta_{i}}$. From Lemma \ref{lem:2}, there exists a permutation $\sigma$ such that
$$
\sum_{i=1}^{s}\rho_{\sigma_{i}} \le \sum_{i=1}^{s}\theta_{\sigma_{i}}
$$
for every $1 \le s \le \omega(n)$. We consider $\sigma$ fixed and choose the minimal $s$ such that
$$
\sum_{i=1}^{s}\theta_{\sigma_{i}} \ge 1-\alpha(\theta,\eta,\delta), \quad \mbox{thus} \quad \sum_{i=1}^{s-1}\theta_{\sigma_{i}} < 1-\alpha(\theta,\eta,\delta).
$$
We define
$$
b:=\prod_{i=1}^{s}p_{\sigma_{i}}^{\beta_{\sigma_{i}}}.
$$
With this factorization $n=ab$ established, we observe that
\begin{eqnarray}\nonumber
\tau(b) & = & (\beta_{\sigma_{s}}+1) \cdot \prod_{i=1}^{s-1}(\beta_{\sigma_{i}}+1)\\ \nonumber
& \le & 2V(n) \tau(n)^{1-\alpha(\theta,\eta,\delta)}\\ \label{tau_b}
& \ll & V(n) \tau(n)^{1-\xi(\theta,\eta)}
\end{eqnarray}
from Proposition \ref{prop:3}. Also, since $a=n/b$, we have $1 \le a = n^{\alpha} \le n^{\alpha(\theta,\eta,\delta)}$. If $a = 1$, the result follows from \eqref{D_id}, \eqref{tau_b} and the fact that $D_{1}(\cdot,\cdot) \le 1$. We can now assume that $a> 1$. We seek to bound $D_{a}\bigl(\frac{X}{e},\frac{Y}{e}\bigr)$ for a given divisor $e$ of $b$, which we note $e=n^{t}$. Let us remark that $D_{a}\bigl(\frac{X}{e},\frac{Y}{e}\bigr) \le 1$ if either $e > Y$ or $\frac{X}{e} > \frac{a}{2}$. Also, by Proposition \ref{prop:3}, we know that
\begin{eqnarray*}
\Bigl(\frac{\log (X/e)}{\log a}\Bigr)^{2}-\Bigl(\frac{\log (Y/e)}{\log a}\Bigr) & = & \Bigl(\frac{\theta-t}{\alpha}\Bigr)^{2}-\Bigl(\frac{\eta-t}{\alpha}\Bigr)\\
& \ge & \frac{\bigl(\theta(1-\theta)\bigr)^{2}}{\log \tau(n)}
\end{eqnarray*}
for $t \in [0,\alpha]$. In any case, from Proposition \ref{prop:1} and \eqref{D_id}, we deduce that
\begin{eqnarray*}
D_{n}(X,Y) & \ll & V(n) \tau(n)^{1-\xi(\theta,\eta)}\Bigl(\frac{\log \tau(n)}{\theta(1-\theta)}+\frac{1}{\theta(1-\theta)}\Bigr)\\
& \ll & \tau(n)^{1-\xi(\theta,\eta)}\frac{V(n)\log \tau(n)}{\theta(1-\theta)}.
\end{eqnarray*}
The proof is complete.

\section{Proof of Theorem \ref{thm:2}}

We need the following proposition to deal with certain cases.

\begin{prop}\label{prop:4}
Let $0 < \theta < 1$ and $0 < \epsilon < \theta$ be fixed real numbers. If Conjecture \ref{con:2} holds, then
$$
k_{\epsilon}(\theta) \ge 1.
$$
\end{prop}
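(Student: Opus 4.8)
\textbf{Proof plan for Proposition \ref{prop:4}.}

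The plan is to exhibit, for every $\theta$ and $\epsilon$, at least one integer $n \ge 1$ together with a window $[n^{\theta}, n^{\theta}+n^{\theta-\epsilon}]$ that contains a divisor of $n$; this forces $D_n(n^\theta, n^{\theta-\epsilon}) \ge 1$, and hence $k_\epsilon(\theta) \ge 1$ under Conjecture \ref{con:2}. The cleanest candidate is $n = 1$: here $\mathcal D_n = \{1\}$, and $1^\theta = 1$ lies in $[1, 1 + 1] = [1,2]$, so $D_1(1^\theta, 1^{\theta-\epsilon}) = 1$. Since Conjecture \ref{con:2} asserts $D_n(n^\theta, n^{\theta-\epsilon}) \le k_\epsilon(\theta)$ for \emph{each} integer $n \ge 1$, in particular for $n = 1$, we immediately get $k_\epsilon(\theta) \ge D_1(1^\theta, 1^{\theta-\epsilon}) = 1$.

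First I would recall the definition $D_n(X,Y) = |\{d \in \mathcal D_n : X \le d \le X+Y\}|$ and the conjectured bound. Then I would specialize to $n = 1$: note $\tau(1) = 1$, $\mathcal D_1 = \{1\}$, and $n^\theta = 1$, $n^{\theta-\epsilon} = 1$ regardless of the values of $\theta, \epsilon$. Next I would check that $1 \in [1, 1+1]$, so the single divisor $d = 1$ is counted, giving $D_1(1,1) = 1$. Finally I would invoke Conjecture \ref{con:2} with this $n$ to conclude $k_\epsilon(\theta) \ge 1$.

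There is essentially no obstacle here — the statement is a sanity-check lemma isolating a trivial lower bound, presumably needed so that the main argument of Theorem \ref{thm:2} (which produces a much larger lower bound via a more elaborate construction, valid only when certain parameters are in range) can fall back on $k_\epsilon(\theta) \ge 1$ in the remaining edge cases. The only thing to be slightly careful about is that the conjecture is quantified over all $n \ge 1$ including $n = 1$, so the evaluation at $n=1$ is legitimate; alternatively one could use any prime power $n = p^{\beta}$ with a divisor landing in the short interval, but $n = 1$ is the shortest route.
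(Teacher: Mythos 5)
Your argument is logically valid for the statement as written: Conjecture \ref{con:2} is quantified over every integer $n\ge 1$, so evaluating at $n=1$, where $D_{1}(1,1)=1$, does force any admissible constant to satisfy $k_{\epsilon}(\theta)\ge 1$ (in fact $n=2$ works just as well and less degenerately, since $2^{\theta}\le 2\le 2^{\theta}+2^{\theta-\epsilon}$ whenever $0<\epsilon<\theta<1$, so the divisor $d=2$ always lies in the window). However, this is a genuinely different route from the paper's, and it proves strictly less. The paper's proof constructs, for every sufficiently large $m$, an integer $n_{0}$ (a multiple of $m$ chosen just below the least $v$ with $m<v^{\theta}$) such that $n_{0}^{\theta}\le m\le n_{0}^{\theta}+1.1\,\theta\, n_{0}^{2\theta-1}$, and for $\theta>\tfrac12$ it first places a divisor $m$ just below $n_{1}^{1-\theta}$ and then reflects via $d\mapsto n_{1}/d$ to get $n_{1}^{\theta}\le n_{1}/m\le n_{1}^{\theta}+1.2(1-\theta)$. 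This yields \emph{arbitrarily large} $n$ with a divisor in an interval of bounded length above $n^{\theta}$, which is what the proposition is morally about: if the conjecture were phrased for all sufficiently large $n$ (the only regime in which it is a serious statement), your $n=1$ (or $n=2$) shortcut would give nothing, while the paper's construction still applies. Moreover, the paper's proof does double duty: the reflection trick and the accompanying error estimates for $\theta>\tfrac12$ are exactly what the proof of Theorem \ref{thm:2} reuses in its case $\theta\ge\tfrac12$ (``as we did in Proposition \ref{prop:4}''), so the proposition serves as a warm-up for the main construction. In short: your proof suffices for the bare inequality $k_{\epsilon}(\theta)\ge 1$ as stated and as it is invoked in the edge cases of Theorem \ref{thm:2}, but it is a degenerate substitute for the paper's argument and would not supply the technique, nor the large-$n$ robustness, that the paper's version provides.
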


\begin{proof}
Let us begin with the case $\theta \le \frac{1}{2}$. Fix a sufficiently large integer $m$, and the smallest integer $v$ such that $m < v^{\theta}$. There is a multiple of $m$ in the interval $[v-m, v-1]$, say $n_{0}$. We then have
\begin{eqnarray*}
n_{0}^{\theta} & \le & m \le (n_{0}+m)^{\theta}\\
& \le & n_{0}^{\theta}\Bigl(1+\frac{m}{n_{0}}\Bigr)^{\theta}\\
& \le & n_{0}^{\theta}\Bigl(1+\theta\frac{m}{n_{0}}\Bigr)\\
& \le & n_{0}^{\theta}+\frac{1.1\theta}{n_{0}^{1-2\theta}},
\end{eqnarray*}
which is more than sufficient to show that $k_{\epsilon}(\theta) \ge 1$ for $0 < \theta \le \frac{1}{2}$ and $0 < \epsilon < \theta$. Now, consider the case where $\theta > \frac{1}{2}$. A similar argument allows us to find an integer $n_{1}$ having a divisor $m$ such that
$$
n_{1}^{1-\theta}-\frac{1.1(1-\theta)}{n_{1}^{2\theta-1}} \le m \le n_{1}^{1-\theta}
$$
so
$$
n_{1}^{\theta} \le \frac{n_{1}}{m} \le \frac{n_{1}^{\theta}}{1-\frac{1.1(1-\theta)}{n_{1}^{\theta}}} \le n_{1}^{\theta}+1.2(1-\theta)
$$
from which the result follows.
\end{proof}

The parameters $\theta$ and $\epsilon$ are fixed throughout the argument. Let $M$ be a sufficiently large real number for these parameters, and let $L = (\log M)^{2}$. By the Prime Number Theorem, we can assume that
$$
\pi(M+L)-\pi(M) \ge s
$$
where $s \ge 1$ is an integer to be determined later. Define the integer
$$
N_{0}:=p_{1}\cdots p_{s},
$$
where $p_i$ are the $s$ smallest primes greater than $M$. Note that for any group of $\ell \ge 1$ integers $n_{j}$ in the interval $[M, M+L]$, we have
$$
M^{\ell} \le \prod_{j=1}^{\ell}n_{j} \le (M+L)^{\ell}
$$
so that
\begin{equation}\label{int_size}
\prod_{j=1}^{\ell}n_{j}=M^{\ell}+\lambda \ell L(M+L)^{\ell-1}
\end{equation}
for some $0 \le \lambda \le 1$.

Let us begin with the case $\theta \le \frac{1}{2}$. Denoting the ceiling function by $\lceil\cdot\rceil$ and the fractional part by $\{\cdot\}$, we will choose an integer $r \le s$ defined by
$$
r:=\left\{\begin{array}{cl} \lceil\theta s\rceil & \mbox{if}\ 0 < \{\theta s\} \le \frac{1}{2},\\
\lceil\theta s\rceil+1 & \mbox{otherwise}
\end{array}\right.
$$
and also
$$
N:=\max_{\substack{v \ge 1 \\ M^{r} \ge v^{\theta}M^{\theta s}\bigl(1+\theta s L\frac{(M+L)^{s-1}}{M^{s}}\bigr)}}vN_{0},
$$
which in turn defines an integer $m$ such that $N=mN_{0}$. Now, let $\mathcal{Q}$ be the set of the $\binom{s}{r}$ divisors of $N_{0}$ formed by $r$ primes. We will now fix a $q \in \mathcal{Q}$ and show that $N^{\theta} \le q \le N^{\theta-\epsilon}$ if $\epsilon \le \frac{1}{s+3/2\theta}$. On the one hand, we have
\begin{eqnarray}\nonumber
N^{\theta} & \le & m^{\theta}(M^{s}+sL(M+L)^{s-1})^{\theta}\\ \nonumber
& = & m^{\theta}M^{\theta s}\Bigl(1+\frac{sL(M+L)^{s-1}}{M^{s}}\Bigr)^{\theta}\\ \label{ineg_1}
& \le & m^{\theta}M^{\theta s}\Bigl(1+\frac{\theta sL(M+L)^{s-1}}{M^{s}}\Bigr),
\end{eqnarray}
from which we obtain $q \ge N^{\theta}$ by the choice of $N$ and \eqref{int_size}. On the other hand, since
$$
\Bigl(1+\frac{1}{m}\Bigr)^{\theta} \le 1+\frac{\theta}{m} \le 1+\frac{1}{M}
$$
by the choice of $m$, we find
\begin{eqnarray}\nonumber
q & \le & M^{r}\Bigl(1+\frac{rL(M+L)^{r-1}}{M^{r}}\Bigr)\\ \nonumber
& \le & (m+1)^{\theta}M^{\theta s}\Bigl(1+\theta s L\frac{(M+L)^{s-1}}{M^{s}}\Bigr)\Bigl(1+\frac{rL(M+L)^{r-1}}{M^{r}}\Bigr)\\ \nonumber
& \le & \frac{(m+1)^{\theta}}{m^{\theta}}N^{\theta}\Bigl(1+\theta s L\frac{(M+L)^{s-1}}{M^{s}}\Bigr)\Bigl(1+\frac{rL(M+L)^{r-1}}{M^{r}}\Bigr)\\ \nonumber
& \le & N^{\theta}\Bigl(1+\frac{1}{M}\Bigr)\Bigl(1+\theta s L\frac{(M+L)^{s-1}}{M^{s}}\Bigr)\Bigl(1+\frac{rL(M+L)^{r-1}}{M^{r}}\Bigr)\\ \nonumber
& \le & N^{\theta}\Bigl(1+O\Bigl(\frac{sL}{M}\Bigr)\Bigr)\\ \label{ineg_2}
& \le & N^{\theta}+N^{\theta-\epsilon}
\end{eqnarray}
where the last line follows from $\frac{r}{\theta} < s+\frac{3}{2\theta}$.

It remains to show that $\binom{s}{r}$ satisfies the announced inequality. We begin by choosing $s:=\lfloor\frac{1}{\epsilon}-\frac{3}{2\theta}\rfloor$, in accordance with the previous steps, and we will temporarily assume that $\epsilon \le \frac{\theta}{2}$. Note that this assumption implies that $1 \le \frac{1}{2\theta} \le \frac{1}{\epsilon}-\frac{3}{2\theta}$, and therefore $\theta s \ge \frac{1}{4}$ (since $\lfloor x\rfloor \ge \frac{x}{2}$ for $x \ge 1$). We then deduce that $r \asymp \theta s$, and similarly $s-r+1 \asymp s$. We also deduce that $\frac{1}{\epsilon} \ge \frac{1}{\epsilon}-\frac{3}{2\theta} \ge \frac{1}{4\epsilon} \ge 1$, and hence $s \asymp \frac{1}{\epsilon}$.

Now, using the Stirling formula, we have
\begin{eqnarray*}
\binom{s}{r} & \gg & \sqrt{\frac{s}{r(s-r+1)}}\frac{s^{s}}{r^{r}(s-r)^{s-r}}\\
& = & \sqrt{\frac{s}{r(s-r+1)}}\Bigl(\frac{1}{\frac{r}{s}^{\frac{r}{s}}(1-\frac{r}{s})^{1-\frac{r}{s}}}\Bigr)^{s}\\
& \gg & \sqrt{\frac{s}{r(s-r+1)}}\Bigl(\frac{1}{\theta^{\theta}(1-\theta)^{1-\theta}}\Bigr)^{s}\\
& \gg & \sqrt{\frac{\epsilon}{\theta}}\Bigl(\frac{1}{\theta^{\theta}(1-\theta)^{1-\theta}}\Bigr)^{\frac{1}{\epsilon}-\frac{3}{2\theta}}\\
& \gg & \sqrt{\epsilon\theta^{3}}\Bigl(\frac{1}{\theta^{\theta}(1-\theta)^{1-\theta}}\Bigr)^{\frac{1}{\epsilon}}
\end{eqnarray*}
The first two lines are automatic. For the next part, we first study the function $t(x) = \log \frac{1}{x^{x}(1-x)^{1-x}}$. We show that $t(x)$ is increasing for $x \in [0,1/2]$ and that its values lie within the interval $[0,\log 2]$. In the third line, for $r \le s/2$, we use the monotonicity of $t(x)$, whereas for $r > s/2$ and $s$ large enough, we write
\begin{eqnarray*}
\exp(t(r/s)) & = & \exp(t(\theta))+\exp(t(\theta))(\exp(t(r/s)-t(\theta))-1)\\
& = & \exp(t(\theta))+O\Bigl(\frac{1}{s}\Bigr)
\end{eqnarray*}
which follows from $|\frac{r}{s}-\theta| \ll \frac{1}{s}$ and the fact that $1 \le \exp(t(x)) \le 2$. In the fourth line, we introduce an approximation of $s$ in the exponent, along with estimates for $r$, $s-r+1$, and $s$. For the last line, we simplify the fourth one.

If the inequality $\epsilon \le \frac{\theta}{2}$ does not hold, we observe that
$$
\sqrt{\epsilon\theta^{3}}\Bigl(\frac{1}{\theta^{\theta}(1-\theta)^{1-\theta}}\Bigr)^{\frac{1}{\epsilon}} \ll 1
$$
and thus the announced result holds from Proposition \ref{prop:4}. This completes the argument for the case $\theta \le \frac{1}{2}$.

Only the case where $\theta \ge \frac{1}{2}$ remains. Let us begin with the case where $\epsilon < 1-\theta$. The previous construction can be adapted to find arbitrary large integers $n$ for which
$$
|\{d \in \mathcal{D}_{n}|\ n^{1-\theta}-0.5n^{1-\theta-\epsilon} \le d \le n^{1-\theta}\}| \gg k_{\epsilon}(\theta)
$$
so that, as we did in Proposition \ref{prop:4}, we conclude that
$$
D_{n}(n^{\theta},n^{\theta-\epsilon}) \gg k_{\epsilon}(\theta).
$$
The details of this construction are mostly the same. We choose the primes in the interval $[M-L,M]$. We let $1-\theta$ replace $\theta$ everywhere in the argument, including the definition of $r$. We define $N_{1}$, the analogue of $N_{0}$, and we note
$$
N':=\min_{\substack{v \ge 1 \\ M^{r} \le v^{1-\theta}M^{(1-\theta)s}\bigl(1-\frac{(1-\theta)s L}{M(1-sL/M)^{\theta}}\bigr)}}vN_{1}.
$$
The analogue of \eqref{ineg_1} gives $q \le N'^{1-\theta}$ and the analogue of \eqref{ineg_2} gives $q \ge N'^{1-\theta}-0.5N'^{1-\theta-\epsilon}$. Everything else is the same. Finally, in the case where $\epsilon \in [1-\theta,\theta)$, we verify that the announced result is simply $k_{\epsilon}(\theta) \gg 1$, which follows from Proposition \ref{prop:4}.

\section{A discussion around Conjecture \ref{con:1}}

It is well known that the large divisors of an integer $n$ are isolated from each other. Indeed, let $d$ and $d+h$ be two consecutive divisors of $n$ in increasing order. We observe that
$$
1 \le \frac{n}{d}-\frac{n}{d+h}=\frac{hn}{d(d+h)} \quad \mbox{which leads to} \quad h > \frac{d^{2}}{n}
$$
which is non-trivial when $d \ge \sqrt{n}$. Proposition 4.1 of \cite{pe:mr} suggests a completely different method that is most efficient when the divisors are close to $\sqrt{n}$.

Let $a, b$ be two fixed coprime positive integers and consider the function
$$
f(x):=a\frac{n}{x}+bx \quad x > 0.
$$
This simple function has all the desired properties and more. First of all, its derivative vanishes at $x_{0}=\sqrt{\frac{an}{b}}$ where $f(x)$ attains its minimum. Moreover, from its Taylor series, we have
\begin{eqnarray*}
f(x_{0}+h) & = & f(x_{0})+f''(\zeta)\frac{h^{2}}{2}\\
& = & f(x_{0})+\frac{an}{\zeta^{3}}h^{2}
\end{eqnarray*}
for some $\zeta \in [x_{0},x_{0}+h]$. Secondly, for every divisor $d$ of $n$, $f(d)$ is an integer. Also, as we restrict our attention to the divisors $d \ge x_{0}$, the value of $f(d)$ determines $d$.

We deduce that $|f(x_{0}+h)-f(x_{0})| \ge i$ requires $h \ge \sqrt{\frac{ix_{0}}{b}}$ and therefore
$$
D_{n}\bigl(x_{0},\sqrt{\frac{ix_{0}}{b}}\bigr) \le i+1.
$$

Furthermore, we note that
\begin{equation}\label{alg}
f(d)^{2}-f^{*}(d)^{2}=4abn
\end{equation}
where
$$
f^{*}(x):=a\frac{n}{x}-bx.
$$
This associated function $f^{*}$ also takes integer values at divisors $d$ of $n$. This implies that \eqref{alg} is, in fact, a nontrivial algebraic relation satisfied by $(f(d),f^{*}(d))$. Interestingly enough, this algebraic identity allows us to get constraints modulo well chosen prime powers on the values taken by $f(d)$. This is made explicit in Lemma \ref{lem:4}, and everything gets in place to apply the large sieve from Lemma \ref{lem:3}. For each $p \ge 3$, we choose the set $\mathcal{P}$ to contain $p^{v_{p}(4abn)+1}$, where $v_{p}(\cdot)$ is the usual $p$-adic valuation. We can verify that the value of the function $h(p^{\beta})$ in Lemma \ref{lem:3} is 
$$
h(p)=1+O\Bigl(\frac{1}{p}\Bigr), \quad h(p^{2})=\frac{1}{p-1}, \quad h(p^{3})=\frac{1}{2p}+O\Bigl(\frac{1}{p^{2}}\Bigr)
$$
and that it satisfies $h(p^{\beta}) \ll \frac{1}{p^{2}}$ for $\beta \ge 4$. For all $n$, we obtain
\begin{equation}\label{ine_ls}
D_{n}\bigl(x_{0},\sqrt{\frac{ix_{0}}{b}}\bigr) \le \frac{i+1+Q^{2}}{H}
\end{equation}
where $H=\sum_{q\le Q}h(q)$.

It is very difficult, but not impossible, for an integer $n$ to ensure that $H$ requires $i$ to grow large before allowing an interesting result of the form $o(i)$. One must consider integers that are products of the majority of small prime numbers raised to at least the third power. 

Nevertheless, some general statements can be deduced from \eqref{ine_ls}. For example, in the case we have $n$ respectively squarefree or cubefree, we can show that the right hand side of \eqref{ine_ls} is $\ll \frac{i}{\log i}$ and $\ll \frac{i}{\sqrt{\log i}}$ $(i \ge 2)$ respectively. Moreover, as soon as $Q \gg \log n$, there is necessarily several prime numbers that do not divide $n$, so that if $\frac{i}{\log n \log \log n} \rightarrow \infty$, we get $D_{n}\bigl(x_{0},\sqrt{\frac{ix_{0}}{b}}\bigr)=o(i)$ in full generality.

{\sc Département de mathématiques et de statistique, Université Laval, Pavillon Alexandre-Vachon, 1045 Avenue de la Médecine, Québec, QC G1V 0A6} \\
{\it E-mail address:} {\tt Patrick.Letendre.1@ulaval.ca}

\end{document}